\newtheorem{lem}{Lemma}
\newtheorem{prop}{Proposition}
\newtheorem{thm}{Theorem}
\newtheorem*{thm-non}{Theorem}
\newtheorem{cor}{Corollary}
\theoremstyle{definition}
\newtheorem{defn}{Definition}
\newtheorem{rem}{Remark}
\newcounter{numl}
\newcommand{\labelnuml}{\textup{(\roman{numl})}}
\newenvironment{numlist}{\begin{list}{\labelnuml}%
{\usecounter{numl}\setlength{\leftmargin}{0pt}%
\setlength{\itemindent}{2\parindent}%
\setlength{\itemsep}{\smallskipamount}\def
\makelabel ##1{\hss \llap {\upshape ##1}}}}{\end{list}}
\DeclareSymbolFont{script}{U}{eus}{m}{n}
\DeclareSymbolFontAlphabet{\mathscr}{script}
\DeclareMathSymbol{\Wedge}{0}{script}{"5E}
\DeclareMathAlphabet{\mathrmsl}{OT1}{cmr}{m}{sl}
\newcommand{\R}{{\mathbb R}}
\newcommand{\C}{{\mathbb C}}
\newcommand{\Q}{{\mathbb Q}}
\newcommand{\T}{{\mathbb T}}
\newcommand{\cL}{{\mathcal L}}
\newcommand{\cO}{{\mathcal O}}
\newcommand{\Id}{\mathrm{Id}}
\newcommand{\grad}{\mathrm{grad}}
\newcommand{\FS}{{\rm FS}}
\newcommand{\Hilb}{{\rm Hilb}}
\newcommand{\Tr}{{\rm Tr}}
\newcommand{\vol}{\mathrm{vol}}
\newcommand{\hxi}{\hat{\xi}}
\newcommand{\bigslant}[2]{{\raisebox{.2em}{$#1$}\left/\raisebox{-.2em}{$#2$}\right.}}
\begin{document}

\author[A. Lahdili]{Abdellah Lahdili}
\address{Lahdili Abdellah \\ D{\'e}partement de Math{\'e}matiques\\
UQAM\\ C.P. 8888 \\ Succursale Centre-ville \\ Montr{\'e}al (Qu{\'e}bec) \\
H3C 3P8 \\ Canada}
\email{lahdili.abdellah@courrier.uqam.ca}

\title[]{Conformally K\"ahler, Einstein--Maxwell metrics and boundedness of the modified Mabuchi functional}

\begin{abstract} 
We prove that if a compact smooth polarized complex manifold admits in the corresponding Hodge K\"ahler class a conformally K\"ahler, Einstein--Maxwell metric, or more generally, a K\"ahler metric of constant $(\xi, a, p)$-scalar curvature in the sense of \cite{ACGL, lahdili}, then this metric minimizes the $(\xi,a,p)$-Mabuchi functional introduced in \cite{lahdili}. Our method of proof extends the approach introduced by Donaldson \cite{donaldson2,donaldson3} and developed by Li \cite{li} and Sano--Tipler \cite{ST}, via finite dimensional approximations and generalized balanced metrics. As an application of our result and the recent construction of Koca--T{\o}nnesen-Friedman \cite{KoTo16}, we describe the K\"ahler classes on a geometrically ruled complex surface of genus  greater than 2,  which admit conformally K\"ahler, Einstein-Maxwell metrics. 
\end{abstract}

\maketitle

\section{Introduction}  
Let $(X,J,g)$ be a compact K\"ahler manifold of real dimension $2m\geq 4$. A Hermitian metric $\tilde{g}$ is said to be {\it conformally K\"ahler, Einstein--Maxwell} metric (cKEM for short) if there exist a smooth positive function $f$ on $X$ such that $g:=f^{2}\tilde{g}$ is a K\"ahler metric, and  
\begin{enumerate}
\item[(a)] $\xi:=J\grad_g(f)$ is Killing for both $g$ and $\tilde{g}$,
\item[(b)] $\tilde{g}$ has constant scalar curvature, i.e. 
\begin{equation}\label{Scal_-2m+1}
{\rm Scal}_{\tilde g}={\rm const}.
\end{equation}
\end{enumerate}
In dimensions $4$, cKEM metrics have been first introduced and studied in \cite{ambitoric1,lebrun0,lebrun1} whereas a high dimensional extension of the theory within a formal GIT setting was found by Apostolov--Maschler \cite{AM}. A number of recent existence results appear in \cite{FO,FO1,KoTo16,lahdili}.

The terminology refers to the fact \cite{AM} that the trace-free part of the Ricci tensor of a cKEM metric $\tilde g$ can be written as $\omega^{-1} \circ \varphi_0$, where $\omega$ is the (closed) K\"ahler form of $(g, J)$ and $\varphi_0$ is a co-closed primitive $(1,1)$-form. Notice that in dimension $4$,  both $\omega$ and $\varphi_0$ are then harmonic as being self-dual and anti-selfdual $2$-forms, respectively, so that a cKEM metric is an example of a Riemannian metric satisfying the Einstein-Maxwell equations with cosmological constant in general relativity, see \cite{DKM, GP}. We also notice that if we take in the above definition $f\equiv 1$, then $\tilde g =g$ is simply a K\"ahler metric of constant scalar curvature (cscK), so that the theory of cKEM metrics naturally generalizes the theory of cscK metrics which is the subject of most active current research. It is thus natural to try to extend known results about cscK metrics to the cKEM setting.

\smallskip

One can put cKEM metrics in the general framework of K\"ahler metrics with constant $(\xi,a,p)$-scalar curvature, recently introduced in \cite{ACGL,lahdili}. Denote by ${\rm Aut}_{\rm red}(X)$ the {\it reduced automorphism group} of $X$, see e.g. \cite{Gauduchon-book}, i.e. the closed subgroup of complex automorphisms of $X$ whose Lie algebra is the ideal of holomorphic vector fields with zeroes. Let $\xi$ be a fixed real holomorphic vector field with zeros, and suppose that $\xi$ is {\it quasi-periodic}, i.e. the flow of $\xi$ generates a real torus $\T_\xi\subset {\rm Aut}_{\rm red}(X)$. We let $\Omega\in H^{2}_{\rm dR}(X,\R)\cap H^{1,1}(X,\C)$ be a fixed K\"ahler class on $(X,J)$ and denote by $\mathcal{K}^{\xi}_\Omega$ the space of $\T_\xi$-invariant K\"ahler forms $\omega\in\Omega$. It is well-known (see e.g. \cite{Gauduchon-book}) that for any $\omega\in\mathcal{K}^{\xi}_\Omega$ the vector field $\xi$ is Hamiltonian with respect to $\omega$, i.e. $\iota_\xi\omega=-df$ for a smooth function $f$ on $X$ called the {\it Killing potential} of $\xi$. We normalize $f=f_{(\xi,\omega,a)}$ by choosing a real positive constant $a>0$ and requiring $\int_X f_{(\xi,\omega,a)} \vol_\omega=a$. As noticed in \cite[Lemma 1]{AM}, we can choose $a>0$ so that $f_{(\xi,\omega,a)} >0$ for any $\omega\in\mathcal{K}^{\xi}_{\Omega}$. For any real number $p\in\mathbb{R}$ we then define the $(\xi,a,p)$-{\it scalar curvature} of the K\"ahler metric $g=\omega(\cdot,J\cdot)$, $\omega\in\mathcal{K}^{\xi}_{\Omega}$ to be
\begin{equation}\label{Scal_p}
{\rm Scal}_{(\xi,a,p)}(g):=f^{2}_{(\xi,\omega,a)}{\rm Scal}_g-2(p-1)f_{(\xi,\omega,a)} \Delta_g f_{(\xi,\omega,a)}-p(p-1)|\xi|^{2}_g,
\end{equation}
where ${\rm Scal}_g$ is the usual scalar curvature, $\Delta_g$ is the Riemannian Laplacian on functions, and $|\cdot|_g$ is the tensor norm induced by $g$. The point of this definition is that it extends the usual scalar curvature (which corresponds to the case $\xi=0$) and for the special value $p=2m$ it computes the scalar curvature of the conformal metric $\tilde{g}=f^{-2}_{(\xi,\omega,a)} g$, so that cKEM metrics correspond to K\"ahler metrics with constant $(\xi,a,2m)$-scalar curvature (where $\xi= J \grad_g f$ and $a= \int_X f \vol_{\omega}$). Other values of the parameter $p$ have interesting geometric interpretations too, see e.g. \cite{ACGL, LU}.\\

\smallskip
 
For a fixed K\"ahler form $\omega\in\mathcal{K}^{\xi}_\Omega$ we denote by $\mathcal{K}^{\xi}_\omega$ the space of smooth $\T_\xi$-invariant K\"ahler potentials with respect to $\omega$:
\begin{equation*}
\mathcal{K}^{\xi}_\omega=\{\phi\in C^{\infty}(X,\R)^{\xi}\,\mid\,\omega_\phi:=\omega+dd^{c}\phi>0\},
\end{equation*}
where $C^{\infty}(X,\R)^{\xi}$ is the space of $\T_\xi$-invariant functions. Following \cite{lahdili}, the K\"ahler metrics with constant $(\xi,a,p)$-scalar curvature are critical points of the $(\xi,a,p)$-Mabuchi energy $\mathcal{M}_{(\xi,a,p)}:\mathcal{K}^{\xi}_\omega\rightarrow\R $, defined by 
\begin{equation}\label{Mabuchi}
\begin{cases} \left({\bold d}\mathcal{M}_{(\xi,a,p)}\right)_{\phi}(\dot{\phi})=&{\displaystyle-\int_X\frac{\dot{\phi}\,\accentset{\circ}{{\rm Scal}}_{(\xi,a,p)}(\phi)}{f^{p+1}_{(\xi,\phi,a)}}\vol_{\phi}},\\ 
\mathcal{M}_{(\xi,a,p)}(\omega)=0,
\end{cases}
\end{equation}
for all $\dot{\phi}\in T_\phi \mathcal{K}^{\xi}_\omega\cong C^{\infty}(X,\R)^{\xi}$. In the above formula $f_{(\xi,\phi,a)}$ is the Killing potential of $\xi$ with respect to $\omega_\phi$ (normalized by the constant $a$ as above), $\vol_\phi$ denotes the volume form $\frac{\omega^{m}_\phi}{m!}$, and
\begin{equation*}
\accentset{\circ}{{\rm Scal}}_{(\xi,a,p)}(\phi):={\rm Scal}_{(\xi,a,p)}(g_\phi)-c_{(\Omega,\xi,a,p)},
\end{equation*}
where $c_{(\Omega,\xi,a,p)}$ is the weighted average of ${\rm Scal}_{(\xi,a,p)}(\omega)$, i.e.
\begin{equation}\label{top-const}
c_{(\Omega,\xi,a,p)}:=\dfrac{\displaystyle\int_X\frac{{\rm Scal}_{(\xi,a,p)}(\omega)}{f^{p+1}_{(\xi,a,\omega)}}\vol_{\omega}}{\displaystyle\int_X\frac{1}{f^{p+1}_{(\xi,a,\omega)}}\vol_{\omega}},
\end{equation}
which is independent of the choice of $\omega\in\mathcal{K}^{\xi}_\Omega$, see \cite{AM,lahdili}. Notice that if $\xi=0$, then $\mathcal{M}_{(\xi,a,p)}$ is the usual Mabuchi energy whose critical points are the cscK metrics.\\

\smallskip

A corner-stone result in the theory of cscK metrics is the fact such a metric minimizes the Mabuchi energy in the corresponding K\"ahler class \cite{B-B, CT, donaldson3, li,  ST}. In this paper, we establish an analogous result concerning K\"ahler metrics of constant $(\xi, a, p)$-scalar curvature in a given Hodge K\"ahler class $\Omega=2\pi c_1(L)$, where $L$ is a positive holomorphic line bundle on $X$. Our proof will follow Donladson's method \cite{donaldson3} via approximations with balanced metrics in the case when ${\rm Aut}_{\rm red}(X)$ is discrete, and its ramifications found by C. Li \cite{li} and Sano--Tipler \cite{ST} in the case of a general smooth polarized complex manifold.

\begin{thm}\label{Mabuchi-bounded}
Let $(X,L)$ be a compact smooth polarized complex manifold and $\xi$ a quasi-periodic real holomorphic vector field on $X$, generating a real torus $\T_\xi$ in ${\rm Aut}_{\rm red}(X)$. Then, the $(\xi,a,p)$-Mabuchi energy of the K\"ahler class $\Omega = 2\pi c_1(L)$ attains its minimum at a $\T_\xi$-invariant $(\xi,a,p)$-constant scalar curvature K\"ahler metric in $\Omega$.
\end{thm}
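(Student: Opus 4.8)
The plan is to recover the $(\xi,a,p)$-Mabuchi energy as a scaling limit of finite-dimensional convex functionals on spaces of Bergman-type metrics, and to locate its minimum via a weighted analogue of the generalized balanced metrics of Sano--Tipler; throughout, $\omega_\infty\in\Omega$ denotes a $\T_\xi$-invariant $(\xi,a,p)$-cscK metric, and the goal is to show $\mathcal{M}_{(\xi,a,p)}(\omega_\phi)\geq\mathcal{M}_{(\xi,a,p)}(\omega_\infty)$ for every $\phi\in\mathcal{K}^\xi_\omega$. First I would fix a $\T_\xi$-invariant Hermitian metric on $L$ with curvature a reference form $\omega\in\mathcal{K}^\xi_\Omega$ and lift the $\T_\xi$-action to $L$; then for $k\gg0$ the torus acts on $H^0(X,L^k)$, and each $\T_\xi$-invariant Hermitian inner product $H$ on $H^0(X,L^k)$ produces a Kodaira embedding of $X$ together with a pulled-back Fubini--Study form $\tfrac1k\,\omega_{\FS(H)}\in\mathcal{K}^\xi_\Omega$. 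The resulting Bergman metrics at level $k$ form a finite-dimensional family, parametrized by the symmetric space $\mathcal{B}_k$ of $\T_\xi$-invariant inner products, which becomes dense in $\mathcal{K}^\xi_\Omega$ as $k\to\infty$.

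Next I would introduce, for each $k$, a functional $\mathcal{Z}_k$ on $\mathcal{B}_k$ whose Euler--Lagrange equation is a \emph{weighted balanced condition}: the embedded image of $X$ should carry the pushforward of the weighted measure $f_{(\xi,\omega,a)}^{-(p+1)}\vol_\omega$ in a way compatible, weight space by weight space for $\T_\xi$, with the Fubini--Study density, the weight $f_{(\xi,\omega,a)}^{-(p+1)}$ being reconstructed from the $\T_\xi$-moment map through the lift to $L^k$. Two facts then have to be established: (i) $\mathcal{Z}_k$ is convex along the geodesics of $\mathcal{B}_k$, so that a critical point is a global minimum of $\mathcal{Z}_k$ over Bergman metrics at level $k$; and (ii) with the right normalization, $k^{-(m+1)}\mathcal{Z}_k$ converges, as $k\to\infty$, to the $(\xi,a,p)$-Mabuchi energy up to lower-order terms, the convergence and the identification of the leading coefficients resting on a \emph{weighted} Tian--Yau--Zelditch expansion of the $\T_\xi$-equivariant Bergman kernel. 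It is in (ii) that the precise form of the $(\xi,a,p)$-scalar curvature \eqref{Scal_p} and of the normalizing average \eqref{top-const} intervene, by matching the subleading coefficient of the weighted equivariant Bergman density to ${\rm Scal}_{(\xi,a,p)}$.

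Then I would show that $\omega_\infty$ is approximated by weighted balanced metrics: starting from the weighted Hilbert inner products attached to $\omega_\infty$, one writes the balancing equation at level $k$ as a perturbation problem off the cscK metric, computes its linearization, and --- following Donaldson for discrete ${\rm Aut}_{\rm red}(X)$ and the refinements of Li and Sano--Tipler when ${\rm Aut}_{\rm red}(X)$ is positive-dimensional --- inverts it modulo the obstruction coming from holomorphic vector fields commuting with $\xi$. The constant $(\xi,a,p)$-scalar curvature condition forces the relevant weighted Lichnerowicz-type operator to be the only source of cokernel, and that cokernel is absorbed by working $\T_\xi$-equivariantly and modulo the reductive part of the subgroup of ${\rm Aut}_{\rm red}(X)$ preserving $\xi$; a Newton/implicit-function iteration then yields, for all $k\gg0$, inner products $H_k\in\mathcal{B}_k$ whose weighted balancing defect decays faster than any power of $k$ and with $\tfrac1k\,\omega_{\FS(H_k)}\to\omega_\infty$ in $C^\infty$. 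This is the step I expect to be the main obstacle: both the weighted asymptotics of the equivariant Bergman kernel and the surjectivity of the linearized balancing map in the presence of continuous automorphisms require a careful weighted reworking of the unweighted theory, with uniform-in-$k$ control of the right inverse and of the geodesic distances involved.

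Finally, the lower bound follows formally. Given $\phi\in\mathcal{K}^\xi_\omega$, choose Bergman metrics $H_k^\phi\in\mathcal{B}_k$ with $\tfrac1k\,\omega_{\FS(H_k^\phi)}\to\omega_\phi$; convexity of $\mathcal{Z}_k$ together with the approximate minimality of $H_k$ gives $\mathcal{Z}_k(H_k)\leq\mathcal{Z}_k(H_k^\phi)+o(k^{m+1})$. Dividing by $k^{m+1}$, letting $k\to\infty$ through the convergence of step (ii) and using $\tfrac1k\,\omega_{\FS(H_k)}\to\omega_\infty$, and noting that the lower-order corrections cancel in the difference $\mathcal{Z}_k(H_k^\phi)-\mathcal{Z}_k(H_k)$, one obtains $\mathcal{M}_{(\xi,a,p)}(\omega_\phi)\geq\mathcal{M}_{(\xi,a,p)}(\omega_\infty)$. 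Since $\omega_\infty\in\mathcal{K}^\xi_\Omega$, this shows that the $(\xi,a,p)$-Mabuchi energy is bounded below on $\mathcal{K}^\xi_\omega$ and attains its minimum at the $(\xi,a,p)$-cscK metric $\omega_\infty$, which is the assertion of Theorem~\ref{Mabuchi-bounded}.
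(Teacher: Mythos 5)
Your overall framework --- quantizing via $\T_\xi$-equivariant weighted Bergman metrics, introducing a convex functional on the finite-dimensional spaces of invariant inner products whose critical points are weighted balanced metrics, and recovering $\mathcal{M}_{(\xi,a,p)}$ as a scaling limit --- is exactly the framework of the paper (its functionals $Z^{k}_{(\hxi,p)}$ and $\mathcal{L}^{k}_{(\hxi,p)}$, \Cref{L-Mab} and \Cref{Z-convex}). The problem is the step you yourself flag as the main obstacle: you propose to produce genuine $(\hxi,p)$-balanced (or rapidly-decaying almost balanced) inner products $H_k$ near the cscK metric by linearizing the balancing equation and running a Newton/implicit-function iteration, inverting the linearization ``modulo the obstruction coming from holomorphic vector fields.'' This is the strategy of Donaldson's \emph{first} projective embeddings paper, and it is precisely what breaks down when ${\rm Aut}_{\rm red}(X)$ is not discrete: the cokernel produced by the holomorphic vector fields is a genuine obstruction to solving the balancing equation, not one that can simply be ``absorbed'' by working equivariantly --- there are cscK (even K\"ahler--Einstein) manifolds with continuous automorphisms that are asymptotically Chow unstable, so balanced metrics with the decay you want need not exist. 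A theorem stated for arbitrary polarized $(X,L)$ with a torus action cannot rest on this construction.

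The paper avoids the construction entirely, following Donaldson's second paper and Li and Sano--Tipler. All that is needed for the lower bound is that $H^{\star}_k:=\Hilb^{k}_{(\hxi,p)}(\phi^{\star})$ is an \emph{almost} minimizer of $Z^{k}_{(\hxi,p)}$, i.e.\ $k^{-m}Z^{k}(\Hilb^k(\phi))\geq k^{-m}Z^{k}(H^{\star}_k)+\varepsilon_\phi(k)$ with $\varepsilon_\phi(k)\to 0$ (\Cref{ZH}). This follows from a soft two-line argument: convexity of $Z^{k}$ along the geodesic $H_k(t)$ joining $H^{\star}_k$ to $\Hilb^k(\phi)$ reduces everything to the derivative $P'_k(0)$ at the cscK end; the Bergman expansion of \Cref{rho-exp} at $\phi^{\star}$ has vanishing subleading term (this is exactly where constancy of the $(\xi,a,p)$-scalar curvature enters), so $\rho_{(\hxi,p)}(k\phi^{\star})=f^{-p+1}_{(\hxi,\phi^{\star})}+\mathcal{O}(k^{-2})$, and a crude bound $|a_i(\lambda)|\leq Ck$ on the eigenvalues of the relative endomorphism then forces $P'_k(0)=o(k^{m})$. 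No surjectivity of the linearized balancing map, no uniform right inverse, and no control of geodesic distances are required. If you replace your third paragraph by this derivative estimate (and keep your final paragraph essentially as is, using the paper's identity $\mathcal{L}^{k}=Z^{k}\circ\Hilb^{k}+o(k^m)$ to compare the two functionals), the argument closes; as written, the proposal hinges on an existence statement for balanced metrics that is false in the stated generality.
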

As we have already observed, the above result extends \cite{li} to the general cKEM setting. One can also define a notion of $(\xi,a,p)$-extremal K\"ahler metric (see \cite{lahdili}), by requiring that the $(\xi,a,p)$-scalar curvature defined by \eqref{Scal_p} is a Killing potential for the K\"ahler metric $g$, rather than being just a constant. One can also introduce (see \cite{lahdili}) a relative version of the functional $\mathcal{M}_{(\xi,a,p)}$, reminiscent to the relative Mabuchi energy describing the extremal K\"ahler metrics \cite{Gauduchon-book}. Presumably, the methods of this article can be adapted to extend \Cref{Mabuchi-bounded} to the relative $(\xi,a,p)$-extremal case, along the lines of \cite{ST}, but we shall discuss this and sum related questions in a forthcoming work.

One might also hope to extend \Cref{Mabuchi-bounded} beyond the polarized case. Indeed, in the cscK case such an extension have been found via a deep result of Berman–-Berndtsson \cite{B-B} on the convexity and boundedness of the Mabuchi functional. We expect that along the method of [11] similar properties can possibly be established for the  the functional $\mathcal{M}_{(\xi, a, p)}$, but the details go beyond the scope of the present article.\\

\smallskip

It is observed in \cite{ACGT} that the analogous result of \Cref{Mabuchi-bounded} about extemal K\"ahler metrics can be used in order to classify, on a geometrically ruled complex surface,  the K\"ahler classes admitting extremal K\"ahler metrics. Similarly, combining \Cref{Mabuchi-bounded} above with the construction of \cite{KoTo16}, we show

\begin{cor}\label{c:main}
Let $X=\mathbb{P}(\mathcal{O}\oplus \mathcal{L})\to C$ be a geometrically ruled complex surface over a compact complex curve $C$ of genus ${\bf g}\geq 2,$ where $\mathcal{L}$ is a holomorphic line bundle over C of positive degree, and $\Omega_\kappa= 2\pi \left( c_1(\mathcal{O}(2)_{\mathbb{P}(\mathcal{O}\oplus \mathcal{L})}) + (1+\kappa)\cdot c_1(\mathcal{L})\right)$, $\kappa>1$ is the effective parametrization of the K\"ahler cone of $X$, up to positive scales, see e.g. \cite{at, fujiki}. Then,
\begin{enumerate}
\item[\rm (a)] There exists a real constant $\kappa_0(X)>1$, such that for each $\kappa>\kappa_0(X)$, $\Omega_\kappa$ admits a cKEM  metric, see \cite{KoTo16};
\item[\rm (b)] For any $\kappa\in (1,\kappa_0(X)]$, $\Omega_\kappa$ does not admit a cKEM metric.
\end{enumerate}
\end{cor}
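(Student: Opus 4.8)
The plan for \Cref{c:main} is to deduce part (a) directly from \cite{KoTo16} and to obtain part (b) as a consequence of \Cref{Mabuchi-bounded}. For (a): Koca--T{\o}nnesen-Friedman solve, within the Calabi-type ``admissible'' ansatz on $X=\mathbb{P}(\mathcal{O}\oplus\mathcal{L})\to C$, the constant $(\xi,a,4)$-scalar curvature equation (here $2m=4$, $\xi$ a fixed generator of the fibrewise $\mathbb{C}^{*}$-action and $a>0$ a suitable normalizing constant) in $\Omega_\kappa$ precisely for $\kappa$ larger than an explicit threshold $\kappa_0(X)>1$ depending only on $\mathbf{g}$ and $\deg\mathcal{L}$; the resulting K\"ahler metric, conformally rescaled by $f_{(\xi,\omega,a)}^{-2}$, is the asserted cKEM metric. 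So all the work is in (b), which I would prove by contradiction.

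First I would record that over a curve of genus $\mathbf{g}\geq 2$ with $\deg\mathcal{L}>0$ one has ${\rm Aut}_{\rm red}(X)=\mathbb{C}^{*}$, generated by the fibrewise scaling field $\xi$, so $\T_\xi=S^1$ and a cKEM metric in $\Omega_\kappa$ is exactly a $\T_\xi$-invariant K\"ahler metric of constant $(\xi,a,4)$-scalar curvature in $\Omega_\kappa$ for a suitable $a>0$. Suppose such a metric exists for some $\kappa\in(1,\kappa_0(X)]$. Taking first $\kappa$ rational, so that $\Omega_\kappa$ is a Hodge class, \Cref{Mabuchi-bounded} gives that $\mathcal{M}_{(\xi,a,4)}$ attains its minimum on $\mathcal{K}^{\xi}_{\Omega_\kappa}$, and in particular is bounded below there. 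The passage to irrational $\kappa$ would then be handled at the end via the openness of the cKEM condition in the K\"ahler cone, which reduces those cases to the single value $\kappa=\kappa_0(X)$.

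Next I would restrict $\mathcal{M}_{(\xi,a,4)}$ to the family of admissible K\"ahler metrics in $\Omega_\kappa$. These form a connected subfamily of $\mathcal{K}^{\xi}_{\Omega_\kappa}$ parametrized by smooth profiles $\Phi$ on $[-1,1]$ with $\Phi>0$ on $(-1,1)$ and $\Phi(\pm1)=0$, $\Phi'(\pm1)=\mp2$, the base factor being a fixed constant-scalar-curvature metric on $C$; consequently $\mathcal{F}(\Phi):=\mathcal{M}_{(\xi,a,4)}(\omega_\Phi)$ inherits a lower bound. In this ansatz the Killing potential $f_{(\xi,\omega_\Phi,a)}$ is an explicit affine function of the momentum variable $x$, and by the closed formula for the $(\xi,a,p)$-scalar curvature in the admissible setting (as in \cite{ACGL,KoTo16}) the first variation formula \eqref{Mabuchi} becomes an explicit expression in $\Phi,\Phi',\Phi''$. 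Integrating along the family from a reference profile and integrating by parts using the boundary conditions, one should rewrite $\mathcal{F}(\Phi)$, up to constants depending only on $\Omega_\kappa$, as a one-dimensional functional whose behaviour as $\Phi$ degenerates to zero somewhere is governed by a single explicit polynomial weight $p_\kappa(x)$ with coefficients affine in $\kappa$. A direct analysis of this one-variable problem should then show that $\mathcal{F}$ is bounded below exactly when $p_\kappa>0$ on $[-1,1]$, and that this positivity is precisely the inequality $\kappa>\kappa_0(X)$; for $\kappa\leq\kappa_0(X)$ one exhibits an explicit sequence $\Phi_t$ of admissible profiles degenerating near a point of $[-1,1]$ where $p_\kappa$ fails to be positive, along which $\mathcal{F}(\Phi_t)\to-\infty$. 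This contradicts the lower bound and rules out a cKEM metric in $\Omega_\kappa$.

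The main obstacle, I expect, is the computational core of the previous paragraph: extracting the one-dimensional functional $\mathcal{F}$ from \eqref{Mabuchi} in the admissible ansatz, performing the integrations by parts so that boundedness is controlled by the single weight $p_\kappa$, and matching the resulting positivity threshold with the $\kappa_0(X)$ of \cite{KoTo16}. A secondary delicate point is the endpoint $\kappa=\kappa_0(X)$, where $p_{\kappa_0}$ is nonnegative but has a zero, the admissible solution of the $(\xi,a,4)$-scalar curvature equation degenerates, and the infimum of $\mathcal{M}_{(\xi,a,4)}$ over $\T_\xi$-invariant metrics is not attained: ruling out an a priori non-admissible cKEM metric there requires supplementing the variational argument with the non-attainment of the infimum together with a uniqueness/symmetrization statement for $(\xi,a,p)$-constant scalar curvature metrics modulo ${\rm Aut}_{\rm red}(X)$.
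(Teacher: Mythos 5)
Your strategy coincides with the paper's for part (a) and for $\kappa\in(1,\kappa_0(X))$: existence comes from \cite{KoTo16}; non-existence for rational $\kappa$ comes from \Cref{Mabuchi-bounded} combined with an explicit formula for the restriction of the Mabuchi energy to the Calabi family (the paper quotes this formula from \cite{amt}, and its destabilizing sequence $u_k''=u_\kappa''+kf$ with $f$ a bump function supported where $P_\kappa<0$ is exactly your degenerating family of profiles); irrational $\kappa$ is then handled by the openness/stability theorem \cite[Theorem 2]{lahdili}, as you suggest. One point you gloss over: ``a cKEM metric for a suitable $a>0$'' must be excluded for \emph{every} admissible normalization, not just one. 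The paper first uses the vanishing of the Futaki invariant $\mathfrak{F}_{(\Omega_\kappa,\xi,b)}$ to force $\kappa=(1+b^2)/(2b)$, i.e.\ to pin down $b=b_\kappa$ before any Mabuchi-energy computation; without this reduction your one-dimensional analysis only rules out a single normalization.

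The genuine gap is your treatment of the endpoint $\kappa=\kappa_0(X)$. There $P_{\kappa_0}\geq 0$ on $(-1,1)$ (with a double zero), so the restricted Mabuchi energy on the Calabi family is \emph{not} unbounded below and \Cref{Mabuchi-bounded} yields no contradiction; you acknowledge this, but your proposed remedy --- proving non-attainment of the infimum over all $\T_\xi$-invariant metrics and invoking a uniqueness/symmetrization statement reducing an arbitrary constant $(\xi,b_0,4)$-scalar curvature metric to the Calabi ansatz --- is not available. Non-attainment over all invariant metrics is essentially equivalent to the non-existence you are trying to prove, and the needed symmetrization result is nowhere established and would be a substantial theorem in its own right. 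The paper sidesteps this entirely by applying \cite[Theorem 2]{lahdili} \emph{at} $(\kappa_0,b_{\kappa_0})$: a solution there would deform to solutions at nearby rational pairs $(\kappa,b_\kappa)$ on the curve \eqref{futaki=0} with $\kappa<\kappa_0$, contradicting the non-existence already proved for those values. You should replace your endpoint argument with this deformation argument.
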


\subsection*{Outline of the paper} 
Section 2 contains our main new technical observation (Corollary 2) which, in the polarized case $(X, L)$, identifies the $(\xi, a, p)$-scalar curvature \eqref{Scal_p} with a coefficient in the asymptotic expansion of a suitable $\hxi$-equivariant weighted Bergman kernels associated to the vector spaces of holomorphic sections $\mathcal{H}_k=H^0(X,L^k)$, where $\hxi$ denotes the lifted real holomorphic vector field on $L$, associated to the data $(\xi, a)$. This  result, which is based on the general theory of Berezin--Toeplitz operators developed by Charles \cite{laurent} and Zelditch--Zhou \cite{ZZ}, extends the famous Bergman kernel expansion theorem \cite{BBS, catalin, DLM, Lu, Ruan, Tian-Berg, Zel} to a larger family of equivariant Bergman operators, and allows us to define a suitable notion of  a {\it $(\hxi, p)$-balanced} Fubini--Study metric on  $\mathbb{P}(\mathcal{H}_{k}^*)$ and hence of {\it $(\hat \xi, p)$-balanced} K\"ahler metrics on $X$ (via the Kodaira embedding). We then show in \Cref{prop-bal} that, similarly to \cite{donaldson3}, if a sequence of $(\hxi,p)$-balanced metrics on $X$ converges as $k \to \infty$ to a smooth K\"ahler metric $\omega\in 2\pi c_1(L)$, then $\omega$ must have constant $(\xi,a,p)$-scalar curvature.  In Section 3, we give the proof of \Cref{Mabuchi-bounded}, closely following the method of \cite{donaldson3, li, ST}.  The main new ingredient here is the introduction of suitable functionals on the finite dimensional spaces of Fubini--Study metrics on $\mathbb{P}(\mathcal{H}^{*}_k)$, quantizing the $(\hxi,p)$-Mabuchi energy, and whose minima are the $(\hxi, p)$-balanced metrics (Proposition 2). With these in place, the proof of \Cref{Mabuchi-bounded} is not materially different than the arguments in \cite{donaldson3, li, ST}. In the final Section 4, we give the proof of \Cref{c:main}. It contains three main ingredients. The first one is the definition in \cite{KoTo16} of a polynomial  $P_{\kappa}(x)$ of degree $\leq 4$, and a constant $a>0$,  associated to each normalized K\"ahler class $\Omega_{\kappa}$ on $X$, such that if $P_{\kappa}(x)>0$ on $(-1,1)$, it defines, via the Calabi construction, a  K\"ahler metric of constant $(\xi, a, 4)$-scalar curvature in $\Omega_\kappa$, where $\xi$ is the generator of the $\mathbb{S}^1$-action on $X$ by multiplications on $\mathcal{O}$. As shown in \cite{KoTo16}, this yields the existence part (a) of \Cref{c:main}. The second ingredient, taken from \cite{amt, acgt}, is an expression of the $(\xi, a, 4)$-Mabuchi energy in terms of $P_{\kappa}(x)$: \Cref{Mabuchi-bounded} above then yields the non-existence of cKEM metrics associated to $\Omega_\kappa$ in the case when $\kappa$ is rational and $P_\kappa(x)$ is negative somewhere on $(-1,1)$. In order to deal with the limiting cases, i.e. when $\kappa$ is irrational or $P_\kappa(x)$ has a double root in $(-1,1)$, we use the stability result \cite[Theorem 2]{lahdili}.

\section*{Acknowledgement} 
I am grateful to the anonymous referees for their constructive criticism which lead to improvements of the presentation. I would like to thank my supervisor V. Apostolov for his guidance and invaluable advice. I am grateful to V. Apostolov, G. Maschler and C. T{\o}nnesen-Friedman for giving me access to \cite{amt} and allowing me to use their computation of the modified Mabuchi energy (\Cref{amt} below) in order to establish \Cref{c:main}. 

\section{Equivariant Bergman kernels and $(\hat{\xi},p)$-balanced metrics}

In this section, we specialize to the case of a compact smooth polarized complex manifold $(X,L)$, endowed with the K\"ahler class $\Omega = 2\pi c_1(L)$. We denote by  $\hxi$ the lift to $L$ of the holomorphic vector field $\xi$, by using the normalization constant $a$. Our main observation is the definition of a class of $\hxi$-equivariant weighted Bergman kernels on the vector spaces $\mathcal{H}_k := H^0(X, L^k)$ of holomorphic sections of $L^k$, having the $(\xi, a, p)$-scalar curvature \eqref{Scal_p} as a coefficient in their asymptotic expansions as $k \to \infty$ (see \Cref{rho-exp} below). This will allow us to introduce the notion of a $(\hxi, p)$-balanced Fubini--Study metric on $\mathbb{P}(\mathcal{H}^{*}_k)$, and thus of a {\it $(\hxi, p)$-balanced K\"ahler metric} in $\Omega = 2\pi c_1(L)$,  such that   if a sequence of $(\hxi, p)$-balanced metrics on $X$ smoothly converges as $k \to \infty$ to a smooth K\"ahler metric $\omega \in 2\pi c_1(L)$, then $\omega$ must have constant $(\xi, a, p)$-scalar curvature (\Cref{prop-bal}).

\subsection{Equivariant Bergman kernels}
Let $(X,L)$ be a polarized manifold with the K\"ahler class $\Omega=2\pi c_1(L)$, $\xi$ a quasi-periodic holomorphic vector field generating a real torus $\T_\xi\subset {\rm Aut}_{\rm red}(X)$, and $h$ a Hermitian metric on $L$ whose Chern curvature $2$-form is $F_h=\omega\in\mathcal{K}^{\xi}_\Omega$. We identify the space of Hermitian metrics $h_\phi:=e^{-2\phi}h$, $\phi\in C^{\infty}(X,\mathbb{R})^{\xi}$ with positive curvature forms $F_{h_\phi}>0$, with the space $\mathcal{K}^{\xi}_\omega$.

Let $a>0$ be a normalization constant such that for all $\phi\in\mathcal{K}^{\xi}_\omega$ we have $\int_X f_{(\xi,\phi,a)}\vol_\phi=a$ and $f_{(\xi,\phi,a)}>0$ on $X$ (see \cite[Lemma 1]{AM}). Once the constant $a>0$ is fixed, for any $\phi\in\mathcal{K}^{\xi}_{\omega}$ one can consider the lift $\hat{\xi}$ of $\xi$ to the total space of $L$, given by 
\begin{equation}\label{hxi}
\hxi=\xi^{H}-f_{(\xi,\phi,a)}\partial_\theta,
\end{equation}
where $\xi^{H}$ denotes the horizontal lift on $L$ of $\xi$ with respect to the Chern connection $\nabla^{\phi}$ of $h_\phi$ on $L$, and $\partial_\theta$ stands for the vector field on $L$ generated by rotations on each fiber. It follows from the arguments in \cite[Lemma 1]{AM} that with our normalization for $f_{(\xi,\phi,a)}$, we have
\begin{equation*}
f_{(\xi,\phi,a)}=f_{(\xi,\omega,a)}+d^{c}\phi(\xi),
\end{equation*}
showing that $\hxi$ defined by \eqref{hxi} is independent of the choice of $\phi\in\mathcal{K}^{\xi}_\omega$. Since $\hxi$ is uniquely determined by $(\xi,a)$, we replace the subscript $(\xi, a)$ with $\hxi$.\\

\smallskip

Let $\Phi^{\xi}_t$ (resp. $\Phi^{\hxi}_t$) denote the flow of $\xi$ (resp. $\hxi$). For any smooth section $s$ of $L$ we define
\begin{equation*}
\left(A_{\hxi}s\right)(x):=-\sqrt{-1}\left.\frac{d}{dt}\right|_{t=0}\Phi^{\hxi}_t\left(s\left(\Phi^{\xi}_{-t}(x)\right)\right).
\end{equation*}
It is well-known that (see e.g. \cite[Proposition 8.8.2]{Gauduchon-book})
\begin{equation}\label{A}
A_{\hxi}=-\sqrt{-1}\nabla^{\phi}_{\xi}+f_{(\hxi,\phi)}.
\end{equation}
For a positive function $\Psi\in C^{\infty}(\R)$, we consider the following inner product on the space $C^{\infty}(X,L)$ of smooth sections of $L$: 
\begin{equation}\label{prod-scal}
\langle s, s^{\prime}\rangle_{(\Psi,\hxi,\phi)}:=\int_X(s,s^\prime)_\phi \Psi(f_{(\hxi,\phi)})\vol_\phi,
\end{equation}
where $(\cdot,\cdot)_\phi$ stands for the pointwise Hermitian product $h_\phi$ on $C^{\infty}(X,L)$. A straightforward calculation shows that $A_{\hxi}$ is a Hermitian operator with respect to $\langle\cdot,\cdot\rangle_{(\Psi,\hxi,\phi)}$, preserving the finite dimensional subspace of holomorphic sections $\mathcal{H}:=H^0(X,L)$. We denote by $\Lambda(\hxi)$ the spectrum of $(A_{\hxi})_{\mid \mathcal{H}}$, and let $f_{(\hxi,\phi)}(X)=[a_0,a_1]$  be the (fixed) image of $X$ under the normalized Killing potential of $\xi$ (see \cite[Lemma 1]{AM}). For an eigensection $s\in\mathcal{H}$ corresponding to the eigenvalue $\lambda\in\Lambda(\hxi)$, we have by \eqref{A}
\begin{equation*}
\lambda\cdot|s|_{\phi}^{2}(x)=-\sqrt{-1}\partial|s|^{2}_{\phi}(\xi)+f_{(\hxi,\phi)}(x)|s|_{\phi}^{2}(x),
\end{equation*}
for all $x\in X$. At a point $x_0\in X$ where $|s|^{2}_{h_\phi}$ attains its maximum, we have $\lambda=f_{(\hxi,\phi)}(x_0)\in[a_0,a_1]$, showing that $\Lambda(\hxi)\subset[a_0,a_1]$. For a smooth function $\Phi\in C^{\infty}(\R)$, we can thus define $\Phi(A_{\hxi})$ using the spectral theorem. 
\begin{defn}\cite{B-N, gabor, ZZ}
For a $\langle \cdot,\cdot\rangle_{(\Psi,\hxi,\phi)}$-orthonormal basis $\{s_i\mid i=0,\cdots,N\}$ of $\mathcal{H}$, the $(\Psi,\Phi,\hxi)$-equivariant Bergman kernel of the Hermitian metric $\phi\in \mathcal{K}^{\xi}_\omega$ is the smooth function on $X$ defined by
\begin{equation}\label{berg}
B_{(\Psi,\Phi,\hxi)}(\phi)(x):=\Psi(f_{(\hxi,\phi)}(x))\sum_{i=0}^{N}\left(\Phi(A_{\hxi})s_i(x),s_i(x)\right)_{\phi}.
\end{equation}
\end{defn}
Clearly \eqref{berg} is independent of the choice of a $\langle \cdot,\cdot\rangle_{(\Psi,\hxi,\phi)}$-orthonormal basis of $\mathcal{H}$. 
\\
For $k\in\mathbb{N}$, we denote by $A^{(k)}_{\hxi}$ the induced operator on $C^{\infty}(X,L^{k})$, and consider for $\phi\in \mathcal{K}^{\xi}_\omega$ the $(\Psi,\Phi,\hxi)$-equivariant Bergman kernel of the metric $h^{k}_\phi$ on $L^{k}$:
\begin{equation}\label{Bergman}
B_{(\Psi,\Phi,\hxi)}(k\phi)=\Psi(f_{(\hxi,\phi)})\sum_{i=0}^{N_k}\left(\Phi\left(\frac{1}{k}A_{\hxi}^{(k)}\right)s_i,s_i\right)_{k\phi}.
\end{equation}
where $\{s_i\mid i=0,\cdots,N_k\}$ is a $\langle \cdot,\cdot\rangle_{(\Psi,\hxi,k\phi)}$-orthonormal basis of $\mathcal{H}_k:=H^0(X,L^{k})$.\\

\smallskip

Asymptotic expansions of \eqref{Bergman} for $k\gg 1$ are known to exist in many special cases, see e.g. \cite{B-N, gabor, ma-ma}. In its full generality, needed for the applications in this paper, such an expansion has been shown to exist in \cite{laurent, ZZ}. The result below is a direct corollary of the latter works. In particular, using \cite[Proposition 12]{laurent} we can determine explicitly the second coefficient of the expansion. We note that the coefficient appearing in \cite[Proposition 7.12]{gabor} seems to be incomplete.

\begin{thm}\label{TYZ}\cite{laurent, ZZ}
The $(\Psi,\Phi,\hxi)$-equivariant Bergman kernel of the Hermitian metric $h^{k}_\phi$ on $L^{k}$, for $\phi\in\mathcal{K}^{\xi}_\omega$ admits an asymptotic expansion when $k\gg 1$, given by
\begin{equation*}
(2\pi)^{m}B_{(\Psi,\Phi,\hxi)}(k\phi)=\Phi(f_{(\hxi,\phi)})+\frac{1}{k}S_{(\Psi,\Phi,\hxi)}(\phi)+\mathcal{O}\left(\frac{1}{k^{2}}\right),
\end{equation*}
where
\begin{align*}
S_{(\Psi,\Phi,\hxi)}(\phi)=&\frac{1}{4}\Phi(f_{(\hxi,\phi)}){\rm Scal}(\phi)+\frac{1}{2}\Phi(f_{(\hxi,\phi)})(\log \Psi)^{\prime}(f_{(\hxi,\phi)})\Delta_\phi(f_{(\hxi,\phi)})\\&+\left[\frac{1}{4}\Phi^{\prime\prime}(f_{(\hxi,\phi)})-\frac{1}{2}\Phi^{\prime}(f_{(\hxi,\phi)})(\log\Psi)^{\prime}(f_{(\hxi,\phi)})-\frac{1}{2}\Phi(f_{(\hxi,\phi)})(\log\Psi)^{\prime\prime}(f_{(\hxi,\phi)})\right]|\xi|^{2}_\phi.
\end{align*}
Moreover, the above expansion holds in $C^{\infty}$, i.e. for any integer $\ell\geq 0$ there exist a constant $C_\ell>0$ such that,
\begin{equation*}
\left\Vert(2\pi)^{m}B_{(\Psi,\Phi,\hxi)}(k\phi)-\Phi(f_{(\hxi,\phi)})-\frac{1}{k}S_{(\Psi,\Phi,\hxi)}(\phi)\right\Vert_{C^{\ell}}\leq \frac{C_{\ell}}{k^{2}}
\end{equation*}
\end{thm}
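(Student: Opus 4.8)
The plan is to deduce Theorem \ref{TYZ} by reducing the weighted, $\hxi$-equivariant Bergman kernel $B_{(\Psi,\Phi,\hxi)}(k\phi)$ to a Berezin--Toeplitz operator whose full symbolic calculus is provided by Charles \cite{laurent} and Zelditch--Zhou \cite{ZZ}, and then extracting the subleading term from the second coefficient in their expansion. First I would observe that, by the spectral decomposition of $A^{(k)}_{\hxi}$ on $\mathcal{H}_k$ established just above (with $\Lambda(\hxi)\subset[a_0,a_1]$ uniformly in $k$ after the $\tfrac1k$-rescaling, so that $\Phi(\tfrac1k A^{(k)}_{\hxi})$ and $(\log\Psi)$-type corrections are well-defined smooth functions of a bounded self-adjoint operator), the kernel \eqref{Bergman} is the restriction to the diagonal of the Schwartz kernel of $\Pi_{(\Psi,\hxi,k\phi)}\,\Phi(\tfrac1k A^{(k)}_{\hxi})\,\Pi_{(\Psi,\hxi,k\phi)}$, where $\Pi_{(\Psi,\hxi,k\phi)}$ is the orthogonal projector onto $\mathcal{H}_k$ for the weighted inner product $\langle\cdot,\cdot\rangle_{(\Psi,\hxi,k\phi)}$. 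Two separate effects must be understood: (i) the replacement of the Riemannian volume/metric pair $(\vol_{k\phi}, h_\phi^k)$ by the weighted measure $\Psi(f_{(\hxi,\phi)})\,\vol_\phi$ — this is exactly a smooth positive weight on an otherwise standard polarization, and the corresponding Bergman kernel expansion with a weight is the content of \cite{ZZ} and is recovered in \cite{laurent}; and (ii) the insertion of the function $\Phi$ of the (rescaled) quantized Hamiltonian operator $A^{(k)}_{\hxi}$, which by the standard correspondence between $\tfrac1k A^{(k)}_{\hxi}$ and the classical moment map $f_{(\hxi,\phi)}$ acts, to leading order, as multiplication by $\Phi(f_{(\hxi,\phi)})$ and contributes a computable $\tfrac1k$-correction involving $\Phi'$ and $\Phi''$ and the derivatives of $f_{(\hxi,\phi)}$.

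The core computation is the identification of the coefficient $S_{(\Psi,\Phi,\hxi)}(\phi)$. Here I would invoke \cite[Proposition 12]{laurent}, which gives the second-order term in the Berezin--Toeplitz/weighted-Bergman expansion in terms of the scalar curvature ${\rm Scal}(\phi)$, the Laplacian of the relevant potential, and — for an operator built from a function of $A^{(k)}_{\hxi}$ — the Hessian/gradient contributions of $f_{(\hxi,\phi)}$, which after contraction with the K\"ahler metric produce precisely the term $|\xi|_\phi^2 = |\grad_\phi f_{(\hxi,\phi)}|_\phi^2$ (using $\xi = J\grad_\phi f$). Concretely: the leading term is $\Phi(f_{(\hxi,\phi)})$; the $\tfrac14\Phi(f_{(\hxi,\phi)}){\rm Scal}(\phi)$ piece is the usual Tian--Yau--Zelditch $\tfrac{{\rm Scal}}{2}$ term renormalized by the $(2\pi)^m$ and $\tfrac14$ conventions and twisted by the multiplication operator $\Phi(f_{(\hxi,\phi)})$; the $\tfrac12\Phi(f_{(\hxi,\phi)})(\log\Psi)'(f_{(\hxi,\phi)})\Delta_\phi f_{(\hxi,\phi)}$ piece is the correction coming from differentiating the weight $\Psi(f_{(\hxi,\phi)})$ (so that $(\log\Psi)'$ and $(\log\Psi)''$ appear, and the $\Delta_\phi f$ and $|\xi|^2_\phi$ factors come from the chain rule applied to $f_{(\hxi,\phi)}$); and the bracketed $|\xi|_\phi^2$-term collects all second-derivative contributions, namely $\tfrac14\Phi''$ from the symbolic calculus of $\Phi(\tfrac1k A^{(k)}_{\hxi})$, the mixed term $-\tfrac12\Phi'(\log\Psi)'$ from the interaction of the $\Phi$-insertion with the weight, and $-\tfrac12\Phi(\log\Psi)''$ from the second derivative of the weight. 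I would organize the calculation by first treating the unweighted, $\Phi=\mathrm{id}$ case (recovering the classical expansion), then turning on $\Psi$, then turning on a general $\Phi$, at each stage using linearity of the symbol map and the fact that $\tfrac1k A^{(k)}_{\hxi}$ has principal symbol $f_{(\hxi,\phi)}$ and subprincipal contributions governed by the Chern connection $\nabla^\phi$ via \eqref{A}.

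The $C^\infty$-estimate — that the remainder is $O(k^{-2})$ in every $C^\ell$-norm with constants depending only on $\ell$ — is inherited directly from the corresponding statement in \cite{laurent, ZZ}, since the off-diagonal decay and near-diagonal asymptotics there are already established in $C^\infty$ with locally uniform control; one only needs to observe that the weight $\Psi(f_{(\hxi,\phi)})$ and the smooth function $\Phi$ of a uniformly-bounded family of operators do not disturb these estimates, as multiplication by a fixed smooth function and smooth functional calculus of bounded self-adjoint operators are bounded operations on the relevant scales. I expect the main obstacle to be the bookkeeping in step (ii): correctly matching the normalization conventions of \cite{laurent} (and especially \cite[Proposition 12]{laurent}, which we must use rather than \cite[Proposition 7.12]{gabor}, the latter being noted as incomplete) with the geometric conventions here, so that the half-integer coefficients $\tfrac14$ and $\tfrac12$ and the sign of the $|\xi|_\phi^2$-terms come out exactly as stated; the risk is purely in constants and signs, not in the structure of the argument. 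Everything else is a routine, if lengthy, unwinding of the symbolic calculus, which I would not carry out in full here.
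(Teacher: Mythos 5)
Your proposal follows essentially the same route as the paper: the paper likewise realizes $B_{(\Psi,\Phi,\hxi)}(k\phi)$ as the diagonal kernel of $\Pi_{(\Psi,\hxi,k\phi)}\Phi(\tfrac1k A^{(k)}_{\hxi})\Pi_{(\Psi,\hxi,k\phi)}$, computes the symbol of the Toeplitz operator $T_k=\Pi_{(\Psi,\hxi,k\phi)}A_{\hxi}\Pi_{(\Psi,\hxi,k\phi)}$ from formula \eqref{A} together with the Ma--Marinescu expansion of the weighted projector, and then applies \cite[Proposition 12]{laurent} (and \cite{ZZ}) to obtain the symbol of $\Phi(T_k)$. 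The one step you defer as \emph{routine} --- the explicit subprincipal symbol $\sigma(T_k)=f_{(\hxi,\phi)}+\tfrac14\bigl[f_{(\hxi,\phi)}{\rm Scal}(\phi)+2f_{(\hxi,\phi)}\Delta_\phi(\log\Psi(f_{(\hxi,\phi)}))-2(\log\Psi)^{\prime}(f_{(\hxi,\phi)})|\xi|^{2}_\phi\bigr]\hbar+\cdots$ --- is precisely the computation the paper carries out in detail, so your outline is correct in structure but would need that calculation filled in to pin down the stated coefficients.
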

\begin{proof}
The above theorem is an application of \cite[Proposition 12]{laurent} and we refer the reader to this work for the relevant definitions we use below.

We consider the $\langle\cdot,\cdot\rangle_{(\Psi,\Phi,\hxi)}$-self-adjoint Toeplitz operator $T_k:=\Pi_{(\Psi,\hxi,k\phi)}A_{\hxi}\Pi_{(\Psi,\hxi,k\phi)}$ and we denote by $T_k(x)$ the restriction to the diagonal $\{x=x^{\prime}\}$ of its Schwartz kernel i.e. 
\begin{equation*}
T_k(x)=\Psi(f_{(\hxi,\phi)}(x))\sum_{i=0}^{N_k}\left(\frac{1}{k}(A_{\hxi}^{(k)}s_i)(x),s_i(x)\right)_{k\phi}.
\end{equation*}
By a straightforward calculation using \eqref{A} and the formula $\nabla^{k\phi}_{\xi}s_i=(\partial\log|s_i|^{2}_{k\phi})(\xi)s_i$ we have:
\begin{equation*}
T_k(x)=-\frac{\sqrt{-1}}{k}\left(\partial\Pi_{(\Psi,\hxi,k\phi)}\right)(\xi)+\left[f_{(\hxi,\phi)}(x)+\frac{\sqrt{-1}}{k}\partial\log{(\Psi(f_{(\hxi,\phi)}))}(\xi)\right]\Pi_{(\Psi,\hxi,k\phi)}(x)
\end{equation*}
where $\Pi_{(\Psi,\hxi,k\phi)}(x)$ is given by
\begin{equation*}
\Pi_{(\Psi,\hxi,k\phi)}(x)=\Psi(f_{(\hxi,\phi)}(x))\sum_{i=0}^{N_k}\left|s_i\right|_{k\phi}^{2}(x).
\end{equation*}
Thus, $\Pi_{(\Psi,\hxi,k\phi)}(x)$ admits the following $C^{\infty}$-asymptotic expansion (see e.g. \cite[Theorem 4.1.2]{ma-ma})
\begin{equation}\label{xxx}
\Pi_{(\Psi,\xi,k\phi)}(x)=1+\frac{1}{4k}\left[{\rm Scal}(\phi)+2\Delta_\phi(\log \Psi(f_{(\xi,\phi)}))\right]+\mathcal{O}\left(\frac{1}{k^{2}}\right).
\end{equation}
It follows from \cite[Proposition 3]{laurent} and \eqref{xxx} that the symbol of $T_k$ is given by
\begin{equation*}
\sigma(T_k)=f_{(\hxi,\phi)}+\frac{1}{4}\left[f_{(\hxi,\phi)}{\rm Scal}(\phi)+2f_{(\hxi,\phi)}\Delta_\phi(\log \Psi(f_{(\hxi,\phi)}))-2(\log\Psi)^{\prime}(f_{(\hxi,\phi)})|\xi|^{2}_\phi\right]\hbar+\cdots
\end{equation*}
By \cite[Proposition 12]{laurent} (see also \cite[Proposition 2.1]{ZZ}) $\Phi(T_k)$ is a Toeplitz operator too, with symbol
\begin{equation*}
\sigma\left(\Phi(T_k)\right)=\Phi(f_{(\hxi,\phi)})
+\frac{1}{4}S_{(\Psi,\Phi,\hxi)}(\phi)\hbar+\cdots
\end{equation*}
The result follows.
\end{proof}

In the special case when $\Psi(t):=t^{-p+1}$ ($p\in\mathbb{R}$) and $\Phi(t):=t^{q}$ ($q\in\mathbb{R}$), we denote the associated $(\Psi,\Phi,\hxi)$-equivariant Bergman kernel by $B_{(\hxi,p,q)}(k\phi)$. As a direct corollary of \Cref{TYZ}, we have 
\begin{cor}\label{TYZ-q}
We have the following $C^{\infty}$ asymptotic expansion
\begin{equation*}
(2\pi)^{m}B_{(\hxi,p,q)}(k\phi)=f_{(\hxi,\phi)}^{q}+\frac{1}{k}S_{(\hxi,p,q)}(\phi)+\mathcal{O}\left(\frac{1}{k^{2}}\right),
\end{equation*}
where 
\begin{equation*}
4S_{(\hxi,p,q)}(\phi)=f_{(\hxi,\phi)}^{q}{\rm Scal}(\phi)-2(p-1)f_{(\hxi,\phi)}^{q-1}\Delta_\phi(f_{(\hxi,\phi)})+(q-1)(q+2p-2)f_{(\hxi,\phi)}^{q-2}|\xi|^{2}_\phi.
\end{equation*}
\end{cor}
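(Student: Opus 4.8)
The plan is to obtain \Cref{TYZ-q} as an immediate specialization of \Cref{TYZ} to the functions $\Psi(t):=t^{1-p}$ and $\Phi(t):=t^{q}$. The first thing I would check is that these choices are admissible. Since the normalized Killing potential satisfies $f_{(\hxi,\phi)}>0$ with fixed image $f_{(\hxi,\phi)}(X)=[a_0,a_1]\subset(0,\infty)$ by \cite[Lemma 1]{AM}, the function $\Psi(t)=t^{1-p}$ is smooth and strictly positive on a neighbourhood of $[a_0,a_1]$, and $\Phi(t)=t^{q}$ is smooth there; moreover $\Lambda(\hxi)\subset[a_0,a_1]$, so $\Phi\bigl(\tfrac1k A^{(k)}_{\hxi}\bigr)=\tfrac{1}{k^{q}}\bigl(A^{(k)}_{\hxi}\bigr)^{q}$ is well defined by the spectral theorem. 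Hence the hypotheses of \Cref{TYZ} hold and the equivariant Bergman kernel $B_{(\hxi,p,q)}(k\phi)$ is exactly $B_{(\Psi,\Phi,\hxi)}(k\phi)$ for this $(\Psi,\Phi)$.

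Next I would substitute into the second coefficient $S_{(\Psi,\Phi,\hxi)}(\phi)$ of \Cref{TYZ}. Writing $f=f_{(\hxi,\phi)}$, we have $\Phi(f)=f^{q}$, $\Phi'(f)=qf^{q-1}$, $\Phi''(f)=q(q-1)f^{q-2}$, while $\log\Psi(t)=(1-p)\log t$ gives $(\log\Psi)'(f)=(1-p)/f$ and $(\log\Psi)''(f)=(p-1)/f^{2}$. The curvature term becomes $\tfrac14 f^{q}\,{\rm Scal}(\phi)$; the Laplacian term becomes $\tfrac12 f^{q}\cdot\tfrac{1-p}{f}\,\Delta_\phi f=-\tfrac{p-1}{2}f^{q-1}\Delta_\phi f$; and the coefficient of $|\xi|^{2}_\phi$ is
\[
\tfrac14 q(q-1)f^{q-2}-\tfrac12 qf^{q-1}\cdot\tfrac{1-p}{f}-\tfrac12 f^{q}\cdot\tfrac{p-1}{f^{2}}
=\frac{f^{q-2}}{4}\bigl[q(q-1)+2q(p-1)-2(p-1)\bigr].
\]
Then I would record the elementary identity $q(q-1)+2q(p-1)-2(p-1)=(q-1)(q+2p-2)$ (both sides equal $q^{2}+2pq-3q-2p+2$), so that the $|\xi|^{2}_\phi$-coefficient is $\tfrac14(q-1)(q+2p-2)f^{q-2}$.

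Collecting the three contributions yields $4S_{(\hxi,p,q)}(\phi)=f^{q}{\rm Scal}(\phi)-2(p-1)f^{q-1}\Delta_\phi f+(q-1)(q+2p-2)f^{q-2}|\xi|^{2}_\phi$, while the leading term of the expansion in \Cref{TYZ} is $\Phi(f)=f^{q}$, which is exactly the asserted expansion; the $C^{\infty}$-estimate with remainder $O(k^{-2})$ is inherited verbatim from \Cref{TYZ}. There is no genuine obstacle here beyond careful bookkeeping of the derivatives of $\Psi$ and $\Phi$ and the one-line algebraic simplification — the only point that really requires the structure of the problem is the admissibility check in the first step, which rests on the strict positivity of the normalized Killing potential guaranteed by the choice of $a>0$.
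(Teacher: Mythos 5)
Your proposal is correct and is exactly the computation the paper intends when it calls \Cref{TYZ-q} a ``direct corollary'' of \Cref{TYZ}: one substitutes $\Psi(t)=t^{1-p}$, $\Phi(t)=t^{q}$ into the coefficient $S_{(\Psi,\Phi,\hxi)}(\phi)$ and simplifies, and your algebra (including the identity $q(q-1)+2q(p-1)-2(p-1)=(q-1)(q+2p-2)$) checks out. The preliminary admissibility remark about $f_{(\hxi,\phi)}(X)=[a_0,a_1]\subset(0,\infty)$ and $\Lambda(\hxi)\subset[a_0,a_1]$ is a sensible addition but is already implicit in the paper's setup.
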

\subsection{$(\hat{\xi},p)$-balanced metrics}
We denote by $\Lambda_k(\hxi)$ the spectrum of $\frac{1}{k}A^{(k)}_{\hxi}$, and consider the decomposition of $\mathcal{H}_k$ as a direct sum of eigenspaces for $\frac{1}{k}A^{(k)}_{\hxi}$:
\begin{equation}\label{decomposition}
\mathcal{H}_k=\bigoplus_{\lambda\in\Lambda_k(\hxi)}\mathcal{H}_k(\lambda).
\end{equation}
For $\lambda\in\Lambda_k(\hxi)$ let $\mathcal{B}_{\hxi}(\mathcal{H}_k(\lambda))$ denote the space of positive definite Hermitian forms which are $\hat{\xi}$-invariant on $\mathcal{H}_k(\lambda)$, and 
\begin{equation*}
\mathcal{B}_{\hxi}(\mathcal{H}_k):=\bigoplus_{\lambda\in\Lambda_k(\hxi)}\mathcal{B}_{\hxi}(\mathcal{H}_k(\lambda)).
\end{equation*}
For $\Psi(t)=t^{-p+1}$, we denote the inner product \eqref{prod-scal} by $\langle\cdot,\cdot\rangle_{(\xi,k\phi,p)}$.

\begin{defn}\label{Quant-map}
We introduce the following quantization maps:
\begin{itemize}
\item The $(\hxi,p)$-Hilbert map ${\rm Hilb}_{(\hxi,p)}^k:\mathcal{K}^{\xi}_\omega\rightarrow\mathcal{B}_{\xi}(\mathcal{H}_k)$ which associates to every $\T_\xi$-invariant K{\"a}hler potential, the $\hat{\xi}$-invariant Hermitian inner product on $\mathcal{H}_k$, given by 
\begin{equation}\label{Hilb}
{\rm Hilb}_{(\hxi,p)}^k(\phi):=\sum_{\lambda\in\Lambda_k(\hxi)} \dfrac{1}{\lambda(p)} \left(\langle\cdot,\cdot\rangle_{(\xi,k\phi,p)}\right)_{\mid \mathcal{H}_k(\lambda)},
\end{equation}
where for $\lambda\in\Lambda_k(\xi)$, we have set
\begin{equation}\label{lambda-p}
\lambda(p):=\lambda^{-p+1}-\frac{c_{(\Omega,\hxi,p)}}{4k}\lambda^{-(p+1)},
\end{equation}
with $c_{(\Omega,\hxi,p)}$ given by \eqref{top-const}.

\item The $(\hxi,p)$-Fubini--Study map ${\rm FS}^{k}_{(\hxi,p)}:\mathcal{B}_{\hxi}(\mathcal{H}_k)\rightarrow\mathcal{K}^{\xi}_\omega$ given by
\begin{equation*}
{\rm FS}^{k}_{(\hxi,p)}(H):=\frac{1}{2k}\log{\left(\frac{1}{C_k}\sum_{i=0}^{N_k}|s_i|_{h^{k}}^{2}\right)},
\end{equation*}
where $C_k$ is the constant given by:
\begin{align}\label{C_k}
&C_k:=\frac{w_{\hxi}^{-p+1}(L^{k})-\frac{c_{(\Omega,\hxi,p)}}{4k}w_{\hxi}^{-(p+1)}(L^{k})}{\displaystyle \int_X f^{-p+1}_{(\hxi,\omega)}\vol_{k\omega}},\\
&w^{q}_{\hxi}(L^{k}):=\Tr\left(\frac{1}{k}A^{(k)}_{\hxi}\right)^{q}.
\end{align}
and $\{s_i\}$ is an adapted $H$-orthonormal basis of $\mathcal{H}_k$. 
\end{itemize}
\end{defn}
Note that 
\begin{equation*}
\omega_{{\rm FS}^{k}_{(\hxi,p)}(H)}=\Phi^{\star}_k\omega_{{\rm FS},k},
\end{equation*}
where $\Phi_k$ is the Kodaira embedding of $X$ to $\mathbb{P}^{N_k}$ using the basis $\{s_i\}$, and $\omega_{{\rm FS},k}$ is the Fubini-Study metric on $\mathbb{P}^{N_k}$.

\begin{defn}
We denote by $\rho_{(\hxi,p)}(k\phi)$ the Bergman kernel of $\Hilb_{(\hxi,p)}^k(\phi)$, given by
\begin{equation}\label{rho(p)}
\rho_{(\hxi,p)}(k\phi):=f_{(\hxi,\phi)}^{-p+1}\sum_{i=1}^{N_k}|s_i|_{k\phi}^{2},
\end{equation}
where $\{s_i\}$ is a $\Hilb_{(\hxi,p)}^k(\phi)$-orthonormal basis of $\mathcal{H}_k$.
\end{defn}
 We can easily see that $\rho_{(\hxi,p)}(k\phi)$ is independent of the chosen basis, so we can take a basis adapted to \eqref{decomposition}, showing that
\begin{equation}\label{rho=B-B}
\rho_{(\hxi,p)}(k\phi)=B_{(\hxi,p,-p+1)}(k\phi)-\frac{c_{(\Omega,\hxi,p)}}{4k}B_{(\hxi,p,-(p+1))}(k\phi).
\end{equation}
\Cref{TYZ-q} thus yields the following
\begin{lem}\label{rho-exp}
$\rho_{(\hxi,p)}(k\phi)$ admits a $C^{\infty}$ expansion when $k\gg1$
\begin{equation*}
(2\pi)^{m}\rho_{(\hxi,p)}(k\phi)=f_{(\hxi,\phi)}^{-p+1}+\frac{1}{4k}f_{(\hxi,\phi)}^{-(p+1)}\accentset{\circ}{{\rm Scal}}_{(\hxi,p)}(\phi)+\mathcal{O}\left(\frac{1}{k^{2}}\right),
\end{equation*}
in the sense that for any integer $\ell\geq 0$ we have, 
\begin{equation*}
\left\Vert(2\pi)^{m}\rho_{(\hxi,p)}(k\phi)-f_{(\hxi,\phi)}^{-p+1}-\frac{1}{4k}f_{(\hxi,\phi)}^{-(p+1)}\accentset{\circ}{{\rm Scal}}_{(\hxi,p)}(\phi)\right\Vert_{C^{\ell}}\leq \frac{C_{\ell}}{k^{2}}.
\end{equation*}
In particular,
\begin{equation*}
{\rm FS}^{k}_{(\hxi,p)}\circ {\rm Hilb}_{(\hxi,p)}^k(\phi)=\phi+\mathcal{O}(k^{-2}).
\end{equation*}
\end{lem}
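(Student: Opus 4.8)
The plan is to read off the expansion of $\rho_{(\hxi,p)}(k\phi)$ from \Cref{TYZ-q} through the identity \eqref{rho=B-B}, and then to recognise its second coefficient as a multiple of the $(\xi,a,p)$-scalar curvature by an elementary algebraic comparison. Concretely, I would apply \Cref{TYZ-q} to each of the two weighted Bergman kernels on the right-hand side of \eqref{rho=B-B}. For $B_{(\hxi,p,-p+1)}(k\phi)$ one takes $q=1-p$, and since then $(q-1)(q+2p-2)=-p(p-1)$, the second coefficient $S_{(\hxi,p,1-p)}(\phi)$ given by \Cref{TYZ-q} is precisely $\tfrac14 f_{(\hxi,\phi)}^{-(p+1)}{\rm Scal}_{(\xi,a,p)}(g_\phi)$ --- this is just \eqref{Scal_p} multiplied through by $f_{(\hxi,\phi)}^{-(p+1)}$. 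For $B_{(\hxi,p,-(p+1))}(k\phi)$ only the leading term $(2\pi)^m B_{(\hxi,p,-(p+1))}(k\phi)=f_{(\hxi,\phi)}^{-(p+1)}+\mathcal{O}(k^{-1})$ is needed, because it enters \eqref{rho=B-B} with the prefactor $c_{(\Omega,\hxi,p)}/4k$. Substituting both into \eqref{rho=B-B} and writing $\accentset{\circ}{{\rm Scal}}_{(\hxi,p)}(\phi)={\rm Scal}_{(\xi,a,p)}(g_\phi)-c_{(\Omega,\hxi,p)}$ (with $c_{(\Omega,\hxi,p)}$ the constant \eqref{top-const}) gives the stated expansion; the $C^\ell$ bounds are inherited verbatim from those in \Cref{TYZ-q}, since \eqref{rho=B-B} is a fixed linear combination of the two kernels with coefficients $1$ and $c_{(\Omega,\hxi,p)}/4k$, both bounded in $k$.

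For the final assertion I would unwind ${\rm FS}^k_{(\hxi,p)}\circ{\rm Hilb}^k_{(\hxi,p)}(\phi)$. Choosing a ${\rm Hilb}^k_{(\hxi,p)}(\phi)$-orthonormal basis $\{s_i\}$ of $\mathcal{H}_k$ adapted to the decomposition \eqref{decomposition}, the relation $|s_i|^2_{k\phi}=e^{-2k\phi}|s_i|^2_{h^k}$ and \eqref{rho(p)} give $\sum_i|s_i|^2_{h^k}=e^{2k\phi}f_{(\hxi,\phi)}^{p-1}\rho_{(\hxi,p)}(k\phi)$, so that ${\rm FS}^k_{(\hxi,p)}\circ{\rm Hilb}^k_{(\hxi,p)}(\phi)=\phi+\tfrac{1}{2k}\log\!\big(f_{(\hxi,\phi)}^{p-1}\rho_{(\hxi,p)}(k\phi)/C_k\big)$. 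It then suffices to show $(2\pi)^m C_k=1+\mathcal{O}(k^{-2})$. The key is the exact identity $\int_X\rho_{(\hxi,p)}(k\phi)\vol_{k\phi}=\sum_{\lambda\in\Lambda_k(\hxi)}(\dim\mathcal{H}_k(\lambda))\,\lambda(p)=w_{\hxi}^{-p+1}(L^k)-\tfrac{c_{(\Omega,\hxi,p)}}{4k}w_{\hxi}^{-(p+1)}(L^k)$, which holds because for such a basis $\langle s_i,s_i\rangle_{(\xi,k\phi,p)}=\lambda_i(p)$ by \eqref{Hilb}--\eqref{lambda-p}; comparing with \eqref{C_k} this says exactly $C_k=\int_X\rho_{(\hxi,p)}(k\phi)\vol_{k\phi}\big/\int_X f_{(\hxi,\omega)}^{-p+1}\vol_{k\omega}$. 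Integrating the expansion already obtained, the leading term contributes $\int_X f_{(\hxi,\phi)}^{1-p}\vol_\phi=\int_X f_{(\hxi,\omega)}^{1-p}\vol_\omega$ (the weighted volume being a cohomological invariant of $(\Omega,\xi,a)$, see \cite{AM,lahdili}), while the $1/k$ term contributes a multiple of $\int_X f_{(\hxi,\phi)}^{-(p+1)}\accentset{\circ}{{\rm Scal}}_{(\hxi,p)}(\phi)\vol_\phi$, which vanishes identically by the definition \eqref{top-const} of $c_{(\Omega,\hxi,p)}$. Hence $(2\pi)^m C_k=1+\mathcal{O}(k^{-2})$; dividing the $C^\ell$-expansion of $f_{(\hxi,\phi)}^{p-1}\rho_{(\hxi,p)}(k\phi)$ by $C_k$ gives $1+\tfrac{1}{4k}f_{(\hxi,\phi)}^{-2}\accentset{\circ}{{\rm Scal}}_{(\hxi,p)}(\phi)+\mathcal{O}(k^{-2})$ with $C^\ell$-controlled remainder, and $\tfrac{1}{2k}\log(\cdot)$, smooth near $1$, turns this into $\phi+\mathcal{O}(k^{-2})$ in every $C^\ell$.

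The one step I expect to require genuine care is the normalisation $(2\pi)^m C_k=1+\mathcal{O}(k^{-2})$: one must verify the \emph{exact} (not merely asymptotic) integral identity for $\int_X\rho_{(\hxi,p)}(k\phi)\vol_{k\phi}$, so that the error there is truly $\mathcal{O}(k^{-2})$ rather than $\mathcal{O}(k^{-1})$ --- this is precisely what the correction $-\tfrac{c_{(\Omega,\hxi,p)}}{4k}\lambda^{-(p+1)}$ in \eqref{lambda-p} and the matching term in \eqref{C_k} are engineered for --- together with the cohomological invariance of the weighted volume and of $\int_X{\rm Scal}_{(\xi,a,p)}(\omega)f_{(\hxi,\omega)}^{-(p+1)}\vol_\omega$ that forces the $1/k$ term to integrate to zero. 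Everything else is routine bookkeeping on top of \Cref{TYZ-q}.
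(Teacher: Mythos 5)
Your proposal is correct and follows essentially the same route as the paper: the expansion is obtained exactly as you describe, by feeding $q=1-p$ and $q=-(p+1)$ into \Cref{TYZ-q} through the identity \eqref{rho=B-B} and observing that $(q-1)(q+2p-2)=-p(p-1)$ for $q=1-p$ reproduces \eqref{Scal_p}, while the normalisation $(2\pi)^m C_k=1+\mathcal{O}(k^{-2})$ via the exact identity $C_k=\int_X\rho_{(\hxi,p)}(k\phi)\vol_{k\phi}\big/\int_X f_{(\hxi,\omega)}^{-p+1}\vol_{k\omega}$ is precisely the computation the paper carries out in the proof of \Cref{prop-bal}.
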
 
Following \cite{donaldson0, Wang, Zhang}, we give the following definition
\begin{defn}
We say that a metric $\phi\in\mathcal{K}^{\xi}_\omega$ is $(\hxi,p)$-balanced of order $k$ if it satisfies:
\begin{equation*}
{\rm FS}^{k}_{(\hxi,p)}\circ {\rm Hilb}_{(\hxi,p)}^k(\phi)=\phi.
\end{equation*}
\end{defn}

Note that if a metric $\phi\in\mathcal{K}^{\xi}_\omega$ is $(\hxi,p)$-balanced of order $k$ then we have: 
\begin{equation*}
\rho_{(\hxi,p)}(k\phi)=C_k f_{(\hxi,\phi)}^{-p+1},
\end{equation*}
where $C_k$ is the constant given by \eqref{C_k}. Similarly to \cite{donaldson0} we have
\begin{prop}\label{prop-bal}
Let $(\phi_j)_{j\geq0}$ be a sequence in $\mathcal{K}^{\xi}_\omega$ such that every $\phi_j$ is a $(\hxi,p)$-balanced metric of order $j$ and $\phi_j$ converge in $C^{\infty}$ to $\phi$. Then $\omega_\phi$ has a constant $(\hxi,p)$-scalar curvature.
\end{prop}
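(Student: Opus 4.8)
The plan is to mimic Donaldson's original argument for balanced metrics, using \Cref{rho-exp} as the replacement for the classical Bergman kernel expansion. The starting point is the observation already recorded in the excerpt: if $\phi_j$ is $(\hxi,p)$-balanced of order $j$, then
\[
\rho_{(\hxi,p)}(j\phi_j)=C_j\, f_{(\hxi,\phi_j)}^{-p+1},
\]
with $C_j$ given by \eqref{C_k}. So the whole proof amounts to taking the limit $j\to\infty$ in this identity after substituting the asymptotic expansion of the left-hand side and identifying the leading correction on the right-hand side.

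First I would expand the left-hand side. By \Cref{rho-exp} applied with $k=j$ and $\phi=\phi_j$, we have
\[
(2\pi)^m\rho_{(\hxi,p)}(j\phi_j)=f_{(\hxi,\phi_j)}^{-p+1}+\frac{1}{4j}f_{(\hxi,\phi_j)}^{-(p+1)}\accentset{\circ}{{\rm Scal}}_{(\hxi,p)}(\phi_j)+\mathcal{O}(j^{-2}),
\]
where the remainder is controlled in every $C^\ell$-norm uniformly, because $\phi_j\to\phi$ in $C^\infty$ and hence all the geometric quantities entering the expansion (the Killing potential $f_{(\hxi,\phi_j)}$, its Laplacian, $|\xi|^2_{\phi_j}$, ${\rm Scal}(\phi_j)$, and the constants $C_\ell$) converge to their counterparts for $\phi$; here one uses that $f_{(\hxi,\phi_j)}$ stays uniformly bounded away from $0$ (the normalization of $a$) so that negative powers of $f$ cause no trouble. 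Next I would expand the right-hand side: from \eqref{C_k} and the definitions of $w^q_{\hxi}(L^j)=\Tr(\tfrac{1}{j}A^{(j)}_{\hxi})^q$, one has, by integrating the expansion of $\rho_{(\hxi,p)}$ over $X$ against $\vol_{j\omega}$ — equivalently, by the Riemann--Roch type asymptotics for the equivariant traces that follow from \Cref{TYZ} — that $(2\pi)^m C_j=1+\mathcal{O}(j^{-1})$, with the $\mathcal{O}(j^{-1})$ coefficient being exactly the weighted average constant $c_{(\Omega,\hxi,p)}/4$ times the appropriate power of $f$, by the very choice of $\lambda(p)$ in \eqref{lambda-p} and of $C_k$ in \eqref{C_k} (this is the design principle behind those normalizations). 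Comparing the two sides at order $j^{-1}$ then forces
\[
f_{(\hxi,\phi)}^{-(p+1)}\bigl({\rm Scal}_{(\hxi,p)}(\phi)-c_{(\Omega,\hxi,p)}\bigr)=0,
\]
i.e. $\accentset{\circ}{{\rm Scal}}_{(\hxi,p)}(\phi)\equiv 0$, which is precisely the statement that $\omega_\phi$ has constant $(\hxi,p)$-scalar curvature.

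Let me spell out the comparison step more carefully, since this is where the bookkeeping lives. Write $u_j:=(2\pi)^m\rho_{(\hxi,p)}(j\phi_j)-f_{(\hxi,\phi_j)}^{-p+1}$. On one hand, \Cref{rho-exp} gives $j\,u_j\to \tfrac14 f_{(\hxi,\phi)}^{-(p+1)}\accentset{\circ}{{\rm Scal}}_{(\hxi,p)}(\phi)$ in $C^0$ (indeed in every $C^\ell$). On the other hand, the balancing equation gives $u_j=\bigl((2\pi)^m C_j-1\bigr)f_{(\hxi,\phi_j)}^{-p+1}+(2\pi)^mC_j\bigl(f_{(\hxi,\phi_j)}^{-p+1}-f_{(\hxi,\phi_j)}^{-p+1}\bigr)$ — more to the point, $u_j=\bigl((2\pi)^mC_j-1\bigr)f_{(\hxi,\phi_j)}^{-p+1}$, so $j\,u_j=\bigl(j((2\pi)^mC_j-1)\bigr)f_{(\hxi,\phi_j)}^{-p+1}$, and the scalar sequence $j((2\pi)^mC_j-1)$ converges to a constant, call it $4\gamma$. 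Passing to the limit, $\tfrac14 f_{(\hxi,\phi)}^{-(p+1)}\accentset{\circ}{{\rm Scal}}_{(\hxi,p)}(\phi)=\gamma\, f_{(\hxi,\phi)}^{-p+1}$, hence $\accentset{\circ}{{\rm Scal}}_{(\hxi,p)}(\phi)=4\gamma f_{(\hxi,\phi)}$ is a Killing potential a priori; but integrating against $f_{(\hxi,\phi)}^{-(p+1)}\vol_\phi$ and using that $c_{(\Omega,\hxi,p)}$ is by definition \eqref{top-const} the weighted average of ${\rm Scal}_{(\hxi,p)}$ shows $\int_X \accentset{\circ}{{\rm Scal}}_{(\hxi,p)}(\phi)f_{(\hxi,\phi)}^{-(p+1)}\vol_\phi=0$, forcing $\gamma=0$ only if $\int_X f_{(\hxi,\phi)}^{-p}\vol_\phi\neq0$, which holds since $f_{(\hxi,\phi)}>0$. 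Therefore $\gamma=0$ and $\accentset{\circ}{{\rm Scal}}_{(\hxi,p)}(\phi)\equiv0$, as claimed.

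The main obstacle, I expect, is not any single estimate but making the uniformity of the expansion in \Cref{rho-exp} genuinely apply along the varying sequence $\phi_j$: the constants $C_\ell$ in that lemma were stated for a fixed $\phi$, so one must check that they can be taken uniform over a $C^\infty$-precompact family such as $\{\phi_j\}\cup\{\phi\}$. This is standard for Bergman-kernel expansions (the constants depend only on finitely many derivatives of the metric and on positive lower bounds for $\omega_{\phi_j}$ and for $f_{(\hxi,\phi_j)}$, all of which are uniform here), and it is exactly the point where Donaldson's and Li's arguments are invoked; beyond that, the proof is the short comparison of the two expansions above.
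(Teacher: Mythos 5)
Your proposal is correct and follows essentially the same route as the paper: apply \Cref{rho-exp} along the diagonal $j=k$, substitute the balanced identity $\rho_{(\hxi,p)}(k\phi_k)=C_kf_{(\hxi,\phi_k)}^{-p+1}$, and compare at order $1/k$. The only (harmless) divergence is in handling $C_k$: the paper observes directly that $(2\pi)^mC_k=1+\mathcal{O}(k^{-2})$, since $\int_X\rho_{(\hxi,p)}(k\phi)\vol_{k\phi}$ equals the numerator of \eqref{C_k} and the weighted average of $\accentset{\circ}{{\rm Scal}}_{(\hxi,p)}$ vanishes by \eqref{top-const}, whereas you allow an a priori $\mathcal{O}(1/k)$ coefficient $\gamma$ and then kill it by the same integration (note the small slip: $f^{-p+1}/f^{-(p+1)}=f^{2}$, so your limiting identity gives $\accentset{\circ}{{\rm Scal}}_{(\hxi,p)}(\phi)=4\gamma f_{(\hxi,\phi)}^{2}$ rather than $4\gamma f_{(\hxi,\phi)}$, which does not affect the conclusion since $\int_X f_{(\hxi,\phi)}^{-p+1}\vol_\phi>0$).
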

\begin{proof}
By \Cref{rho-exp} for $k\gg1$,
\begin{equation*}
\left\Vert(2\pi)^{m}\rho_{(\hxi,p)}(k\phi_{j})-f_{(\hxi,\phi_j)}^{-p+1}-\frac{1}{4k}f_{(\hxi,\phi_j)}^{-(p+1)}\accentset{\circ}{{\rm Scal}}_{(\hxi,p)}(\phi_j)\right\Vert_{C^{\ell}}\leq \frac{C_{\ell}}{k^{2}}.
\end{equation*}
Letting $j=k$, we get
\begin{equation}\label{y}
\left\Vert(2\pi)^{m}C_k f_{(\hxi,\phi_k)}^{-p+1} -f_{(\hxi,\phi_k)}^{-p+1}-\frac{1}{4k}f_{(\hxi,\phi_k)}^{-(p+1)}\accentset{\circ}{{\rm Scal}}_{(\hxi,p)}(\phi_k)\right\Vert_{C^{\ell}}\leq \frac{C_{\ell}}{k^{2}}.
\end{equation}
From \eqref{C_k} and \eqref{rho=B-B} we get  
\begin{align*}
(2\pi)^{m}C_k=& \frac{(2\pi)^{m}\displaystyle\int_X\rho_{(\hxi,p)}(h^{k})\vol_{k\omega}}{\displaystyle\int_X f_{(\hxi,\omega)}^{-p+1}\vol_{k\omega}}\\
=&1+\mathcal{O}(k^{-2}).
\end{align*}
Taking a limit when $k$ goes to infinity in \eqref{y}, we obtain that ${\rm Scal}_{(\hxi,p)}(\phi)=c_{(\Omega,\hxi,p)}$.
\end{proof}

\section{Boundedness of the $(\hxi,p)$-Mabuchi energy as an obstruction to the existence of K\"ahler metrics with constant $(\hxi,p)$-scalar curvature}

In this section we prove \Cref{Mabuchi-bounded}, following the method of \cite{donaldson3,li,ST}. To this end, for each $k\gg1$, we introduce appropriate functionals on the finite dimensional  space of Fubini--Study metrics on ${\mathbb P}(\mathcal{H}^{*}_k)$, which when identified with a subspace of $\mathcal{K}^{\xi}_\omega$ via the Kodaira embedding, will quantize the $(\hxi, p)$-Mabuchi functional of $\Omega=2\pi c_1(L)$. Furthermore, following the main ideas of \cite{donaldson3,li,ST}, we will show that the $(\hxi, p)$-balanced metrics are minima of these functionals, and that a K\"ahler metric with constant $(\xi, a, p)$-scalar curvature induces  {\it almost} $(\hxi, p)$-balanced Fubini--Study metrics on ${\mathbb P}({\mathcal H}^{*}_k)$ for $k\gg1$, i.e. they minimize the corresponding functionals up to an error that goes to zero.
\bigskip
\subsection{Quantization of the $(\hat{\xi},p)$-Mabuchi energy}
We start with introducing finite dimensional analogues of the $(\hxi,p)$-Mabuchi energy \eqref{Mabuchi}, given by \eqref{L-Z} on the spaces $\mathcal{B}_{\hxi}(\mathcal{H}_k)$ and ${\rm FS}^{k}_{(\hxi,p)}\left(\mathcal{B}_{\hxi}(\mathcal{H}_k)\right)$ (see \Cref{Quant-map}), respectively, thus a framework for the proof of \Cref{Mabuchi-bounded} along the lines of \cite{donaldson3, li, ST}.\\ 

\smallskip

We introduce the functional $I_{(\hxi,p)}^{k}:\mathcal{B}_{\hxi}(\mathcal{H}_k)\rightarrow \mathbb{R}$ by
\begin{equation}\label{i}
I_{(\hxi,p)}^{k}(H)=\sum_{\lambda\in\Lambda_k(\hxi)}\lambda(p)\log{\left(\det H_\lambda\right)},
\end{equation}
where we recall that $\lambda(p)$ is the expression \eqref{lambda-p}. This functional is a quantization of the Aubin type functional $\mathbb{I}_{(\hxi,p)}^{k}$ on $\mathcal{K}^{\xi}_\omega$, given by its first variation,
\begin{equation}\label{I}
\begin{cases} \left({\bold d} \mathbb{I}_{(\hxi,p)}^{k}\right)_\phi(\dot{\phi})=&2kC_k{\displaystyle \int_X\dot{\phi}f^{-p+1}_{(\hxi,\phi)}\vol_{k\omega_\phi}}
\\ 
\mathbb{I}_{(\hxi,p)}^{k}(\omega)=0
\end{cases}
\end{equation}
where $C_k$ is given by \eqref{C_k}. It is then straightforward to check that
\begin{lem}
1. We have the following expressions for the variations of the Aubin functional $\mathbb{I}_{(\hxi,p)}^{k}$ and its finite dimensional version $I_{(\hxi,p)}^{k}$:
\begin{align}
\begin{split}\label{dI}
\left(\bold{d} \mathbb{I}_{(\hxi,p)}^{k}\right)_\phi(\dot{\phi})=&2kC_k\int_X\dot{\phi}\left(1+\frac{\Delta_{\phi}}{2k}\right)f^{-p+1}_{(\hxi,\phi)}\vol_{k\omega_\phi}\\&-C_k\int_X(d\dot{\phi},d\log f^{-p+1}_{(\hxi,\phi)})_{g_{\phi}}f^{-p+1}_{(\hxi,\phi)}\vol_{k\omega_{\phi}},
\end{split}\\
\begin{split}\label{di}
\left(\bold{d}\,I_{(\hxi,p)}^{k}\right)_H(\dot{H})=&\sum_{\lambda\in\Lambda_k(\hxi)}\lambda(p)\Tr(H^{-1}_\lambda\dot{H}_\lambda),
\end{split}
\end{align}
where $\phi\in\mathcal{K}^{\xi}_{\omega}$ and $H=(H_\lambda)_{\lambda\in\Lambda_k(\xi)}\in\mathcal{B}_{\hxi}(\mathcal{H}_k)$.\\
2. The second variation of $\mathbb{I}_{(\xi,p)}^{k}$ along a path $\phi_t\in \mathcal{K}^{\xi}_{\omega}$ is given by
\begin{equation}\label{d2I}
\dfrac{d^{2}}{dt^{2}}\mathbb{I}_{(\hxi,p)}^{k}(\phi_t)=2kC_k\int_X \left(\ddot{\phi}_t-|d\dot{\phi}_t|^{2}_{g_{\phi_t}}\right)f^{-p+1}_{(\hxi,\phi_t)}\vol_{k\omega_{\phi_t}}.
\end{equation}
3. For $\phi\in\mathcal{K}^{\xi}_{\omega}$ and $k\gg 1$, the Aubin functional $\mathbb{I}_{(\hxi,p)}^{k}$ is concave along the path $(\phi_k(t))_{t\in[0,1]}$ of $\mathcal{K}^{\xi}_{\omega}$ given by: 
\begin{equation}\label{pot}
\phi_k(t):=\phi+\frac{t}{2k}\log{\left(f^{p-1}_{(\hxi,\phi)}\rho_{(\hxi,p)}(k\phi)\right)}.
\end{equation} 
4. The variation of $(\hxi,p)$-Hilbert map $\Hilb_{(\hxi,p)}^k$ is given by,
\begin{align}
\begin{split}\label{dHilb}
&\left(\bold{d}\,\Hilb_{(\hxi,p)}^{k}\right)_\phi(\dot{\phi})(s,s^{\prime})=\\
&-\sum_{\lambda\in\Lambda_k(\hxi)}\frac{1}{\lambda(p)}\int_X(s(\lambda),s^{\prime}(\lambda))_{k\phi}[2k\dot{\phi}-(d\log f^{-p+1}_{(\hxi,\phi)},d\dot{\phi})_{g_\phi}+\Delta_{\phi}\dot{\phi}]f^{-p+1}_{(\hxi,\phi)}\vol_{k\omega_\phi},
\end{split}
\end{align}
where $\phi\in\mathcal{K}^{\xi}_\omega$ and $s,s^{\prime}\in\mathcal{H}_k$.
\end{lem}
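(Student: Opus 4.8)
The plan is to treat all four items as routine variational computations. The three that involve the infinite-dimensional space $\mathcal{K}^{\xi}_\omega$ all reduce, by the product rule, to the variations along a path $\phi_t=\phi+t\dot\phi$ of the three $\phi$-dependent factors occurring in \eqref{prod-scal} and \eqref{I}: the fibrewise Hermitian norm, for which $\partial_t|s|^{2}_{k\phi_t}=-2k\dot\phi\,|s|^{2}_{k\phi_t}$ (since $h^{k}_{\phi_t}=e^{-2k\phi_t}h^{k}$); the volume form, for which $\partial_t\vol_{k\omega_{\phi_t}}=-(\Delta_{\phi_t}\dot\phi)\vol_{k\omega_{\phi_t}}$; and the normalized Killing potential $f_{(\hxi,\phi_t)}$. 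For the last one I would start from the identity $f_{(\xi,\phi,a)}=f_{(\xi,\omega,a)}+d^{c}\phi(\xi)$ quoted from \cite[Lemma 1]{AM} (which in particular shows the normalization $\int_X f_{(\xi,\phi,a)}\vol_\phi=a$ is preserved along $\phi_t$), differentiate it to get $\partial_t f_{(\hxi,\phi_t)}=d^{c}\dot\phi(\xi)$, and rewrite this using the Killing relation $\iota_\xi\omega_{\phi_t}=-df_{(\hxi,\phi_t)}$ (equivalently $J\xi=-\grad_{g_{\phi_t}}f_{(\hxi,\phi_t)}$) as $\partial_t f_{(\hxi,\phi_t)}=(d\dot\phi,df_{(\hxi,\phi_t)})_{g_{\phi_t}}$, hence $\partial_t f^{-p+1}_{(\hxi,\phi_t)}=(d\dot\phi,d\log f^{-p+1}_{(\hxi,\phi_t)})_{g_{\phi_t}}\,f^{-p+1}_{(\hxi,\phi_t)}$.

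With these at hand, \eqref{di} is immediate from $\delta\log\det H_\lambda=\Tr(H_\lambda^{-1}\delta H_\lambda)$ applied termwise to \eqref{i}; here one uses that the spectrum $\Lambda_k(\hxi)$, the decomposition \eqref{decomposition} and the constants $\lambda(p)$ of \eqref{lambda-p} are all independent of $\phi$, because $A^{(k)}_{\hxi}$ is the infinitesimal generator of the $\hxi$-action on $\mathcal{H}_k$ and $\hxi$ is $\phi$-independent by \eqref{hxi}. The identity \eqref{dHilb} follows by substituting the three variations above into $\langle s,s'\rangle_{(\xi,k\phi,p)}=\int_X(s,s')_{k\phi}\,f^{-p+1}_{(\hxi,\phi)}\vol_{k\omega_\phi}$, restricting to the fixed eigenspaces $\mathcal{H}_k(\lambda)$ and collecting terms. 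The identity \eqref{dI} is obtained from the definition \eqref{I} by adding and subtracting $C_k\int_X\dot\phi\,\Delta_\phi f^{-p+1}_{(\hxi,\phi)}\vol_{k\omega_\phi}$ and integrating by parts once (using $\int_X u\,\Delta_\phi v\,\vol_{k\omega_\phi}=\int_X(du,dv)_{g_\phi}\vol_{k\omega_\phi}$). Finally \eqref{d2I} follows by differentiating \eqref{I} a second time: the $\ddot\phi_t$-term comes from the norm factor, the volume variation contributes $-2kC_k\int_X\dot\phi_t\,f^{-p+1}_{(\hxi,\phi_t)}\,\Delta_{\phi_t}\dot\phi_t\,\vol_{k\omega_{\phi_t}}$, and the Killing-potential variation contributes $2kC_k\int_X\dot\phi_t\,(d\dot\phi_t,df^{-p+1}_{(\hxi,\phi_t)})_{g_{\phi_t}}\vol_{k\omega_{\phi_t}}$; integrating the former by parts produces $-2kC_k\int_X f^{-p+1}_{(\hxi,\phi_t)}|d\dot\phi_t|^{2}_{g_{\phi_t}}\vol_{k\omega_{\phi_t}}$ together with a cross-term that cancels the latter, leaving exactly the right-hand side of \eqref{d2I}.

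The concavity assertion is then immediate from \eqref{d2I}. Along the path \eqref{pot} the velocity $\dot\phi_k(t)=\tfrac{1}{2k}\log\!\big(f^{p-1}_{(\hxi,\phi)}\rho_{(\hxi,p)}(k\phi)\big)$ does not depend on $t$, so $\ddot\phi_k(t)\equiv0$ and \eqref{d2I} gives $\tfrac{d^{2}}{dt^{2}}\mathbb{I}_{(\hxi,p)}^{k}(\phi_k(t))=-2kC_k\int_X|d\dot\phi_k(t)|^{2}_{g_{\phi_k(t)}}\,f^{-p+1}_{(\hxi,\phi_k(t))}\,\vol_{k\omega_{\phi_k(t)}}$, which is $\le0$ as soon as $C_k>0$. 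The latter I would obtain from \eqref{C_k}: the denominator is positive, while $w^{q}_{\hxi}(L^{k})=\Tr\big(\tfrac{1}{k}A^{(k)}_{\hxi}\big)^{q}$ is a sum of positive terms (since $\Lambda_k(\hxi)\subset f_{(\hxi,\phi)}(X)=[a_0,a_1]\subset(0,\infty)$, as in the argument preceding \Cref{TYZ}) and is of order $k^{m}$, so it dominates the $\mathcal{O}(k^{m-1})$ quantity $\tfrac{c_{(\Omega,\hxi,p)}}{4k}w^{-(p+1)}_{\hxi}(L^{k})$, making $C_k>0$ for $k\gg1$. One must also verify $\phi_k(t)\in\mathcal{K}^{\xi}_\omega$ for all $t\in[0,1]$ when $k\gg1$; this follows from \Cref{rho-exp}, which gives $\log\!\big(f^{p-1}_{(\hxi,\phi)}\rho_{(\hxi,p)}(k\phi)\big)=-m\log(2\pi)+\mathcal{O}(k^{-1})$ in every $C^{\ell}$-norm, so that $\omega_{\phi_k(t)}=\omega_\phi+\tfrac{t}{2k}dd^{c}\log\!\big(f^{p-1}_{(\hxi,\phi)}\rho_{(\hxi,p)}(k\phi)\big)>0$ uniformly in $t$.

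I expect the only genuinely delicate points to be two: first, pinning down the variation of the normalized Killing potential with the correct signs, together with the fact that the normalization constant $a$ is preserved along a path in $\mathcal{K}^{\xi}_\omega$; and second, being careful that $\Lambda_k(\hxi)$, the decomposition \eqref{decomposition} and the weights $\lambda(p)$ do not depend on the potential, so that $I_{(\hxi,p)}^{k}$ in \eqref{i} is a well-defined function on $\mathcal{B}_{\hxi}(\mathcal{H}_k)$ and \eqref{di} is meaningful. Everything else is bookkeeping of integrations by parts in the sign conventions of \cite{lahdili}.
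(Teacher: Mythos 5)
Your computations are correct and follow essentially the same route as the paper, which dispatches this lemma as a ``straightforward'' verification: the same three variations (of the fibrewise norm, the volume form via $\partial_t\vol_{\phi_t}=-(\Delta_{\phi_t}\dot\phi_t)\vol_{\phi_t}$, and the Killing potential via $\partial_t f_{(\hxi,\phi_t)}=d^c\dot\phi_t(\xi)=(d\dot\phi_t,df_{(\hxi,\phi_t)})_{g_{\phi_t}}$) drive all four items, and your integration by parts for \eqref{d2I} is equivalent to the paper's rewriting of the cross-term as $\tfrac12 f^{-p+1}_{(\hxi,\phi_t)}\Delta_{\phi_t}(\dot\phi_t^2)$. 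Your added checks that $C_k>0$ and that $\phi_k(t)$ stays in $\mathcal{K}^{\xi}_\omega$ for $k\gg1$ correctly account for the hypothesis $k\gg1$ in item~3.
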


We now consider the functionals $\mathcal{L}^{k}_{(\hxi,p)}:\mathcal{K}^{\xi}_\omega\rightarrow \mathbb{R}$ and $Z^{k}_{(\hxi,p)}:\mathcal{B}_{\hxi}(\mathcal{H}_k)\rightarrow \mathbb{R}$ defined by
\begin{align}\label{L-Z}
\begin{split}
\mathcal{L}^{k}_{(\hxi,p)}&=I_{(\hxi,p)}^{k}\circ \Hilb^{k}_{(\hxi,p)}+\mathbb{I}^{k}_{(\hxi,p)},\\
Z^{k}_{(\hxi,p)}&=\mathbb{I}_{(\hxi,p)}^{k}\circ \FS^{k}_{(\hxi,p)}+I^{k}_{(\hxi,p)},
\end{split}
\end{align}
where $I_{(\hxi,p)}^{k}$ is given by \eqref{i} and $\mathbb{I}_{(\hxi,p)}^{k}$ is given by \eqref{I}.
In what follows we will relate these functionals to the $(\hxi,p)$-balanced metrics, similarly to \cite{donaldson3, li, ST}, and we will show that they quantize the $(\hxi,p)$-Mabuchi energy.

\begin{prop}\label{L-Mab}
The $(\hxi,p)$-balanced metrics of order $k$ are critical points of the functional $\mathcal{L}^{k}_{(\hxi,p)}$. Furthermore, there exist real constants $b_k$ such that, 
\begin{equation*}
\underset{k\rightarrow\infty}{\lim}\left[\frac{2}{k^{m}}\mathcal{L}^{k}_{(\hxi,p)}+b_k\right]= \mathcal{M}_{(\hxi,p)},
\end{equation*} 
where the convergence holds in the $C^{\infty}$-norm.
\end{prop}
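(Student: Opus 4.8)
\textbf{Proof plan for Proposition \ref{L-Mab}.}

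The plan is to verify the two assertions by computing the first variation of $\mathcal{L}^k_{(\hxi,p)}$ and by comparing its leading-order asymptotics with the defining variational formula \eqref{Mabuchi} of $\mathcal{M}_{(\hxi,p)}$. For the first assertion, I would differentiate $\mathcal{L}^k_{(\hxi,p)} = I^k_{(\hxi,p)}\circ\Hilb^k_{(\hxi,p)} + \mathbb{I}^k_{(\hxi,p)}$ at a point $\phi\in\mathcal{K}^\xi_\omega$ using the chain rule together with formulas \eqref{di}, \eqref{dHilb}, and \eqref{dI} from the Lemma. The key point is that when one plugs $\dot{H} = \left(\bold{d}\,\Hilb^k_{(\hxi,p)}\right)_\phi(\dot\phi)$ into \eqref{di}, the sum over $\lambda\in\Lambda_k(\hxi)$ telescopes: each $\lambda(p)$ cancels against the factor $1/\lambda(p)$ in \eqref{dHilb}, and summing $\Tr$ over the eigenspaces reassembles the full Bergman kernel $\rho_{(\hxi,p)}(k\phi)$ via \eqref{rho(p)}. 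After an integration by parts to move the Laplacian and the gradient pairing off $\dot\phi$, one obtains an expression of the form $\int_X \dot\phi\,\big[2k\,\rho_{(\hxi,p)}(k\phi) f^{p+1}_{(\hxi,\phi)} + (\text{first-order terms in }\rho)\big]\,\vol_{k\omega_\phi}$, which must be matched against \eqref{dI}. The critical point equation then reduces precisely to $\rho_{(\hxi,p)}(k\phi) = C_k f^{-p+1}_{(\hxi,\phi)}$, which is (the infinitesimal form of) the $(\hxi,p)$-balanced condition $\FS^k_{(\hxi,p)}\circ\Hilb^k_{(\hxi,p)}(\phi) = \phi$; here one uses that $\Hilb^k_{(\hxi,p)}$ is (up to the automorphism-group action) an injective immersion so that $\dot\phi$ ranges over enough directions.

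For the second assertion, I would use \Cref{rho-exp} as the main analytic input. Having the first-variation formula $\left(\bold{d}\,\mathcal{L}^k_{(\hxi,p)}\right)_\phi(\dot\phi) = \int_X \dot\phi\,\Theta_k(\phi)\,\vol_{k\omega_\phi}$ for an explicit density $\Theta_k(\phi)$ built from $\rho_{(\hxi,p)}(k\phi)$ and its derivatives, I substitute the $C^\infty$-expansion $(2\pi)^m\rho_{(\hxi,p)}(k\phi) = f^{-p+1}_{(\hxi,\phi)} + \frac{1}{4k} f^{-(p+1)}_{(\hxi,\phi)} \accentset{\circ}{{\rm Scal}}_{(\hxi,p)}(\phi) + \mathcal{O}(k^{-2})$. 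The $\mathcal{O}(1)$ and $\mathcal{O}(k)$ pieces in $k^{-m}\Theta_k$ are $\phi$-independent up to total derivatives — they integrate against $\dot\phi$ to give the variation of a functional depending only on $k$ and on the fixed class data, hence contribute the constants $b_k$ — while the surviving $\mathcal{O}(1)$ term reproduces $-\int_X \dot\phi\, \accentset{\circ}{{\rm Scal}}_{(\hxi,p)}(\phi) f^{-(p+1)}_{(\hxi,\phi)}\vol_\phi \cdot \text{const}$, matching $\left(\bold{d}\mathcal{M}_{(\hxi,p)}\right)_\phi(\dot\phi)$ up to a harmless overall positive constant absorbed in the normalization $\frac{2}{k^m}$; one checks the powers of $2\pi$ and of $m!$ bookkeep correctly. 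Since \Cref{rho-exp} gives the expansion in every $C^\ell$-norm with $\mathcal{O}(k^{-2})$ control, and since the functionals $\mathcal{L}^k_{(\hxi,p)}$ and $\mathcal{M}_{(\hxi,p)}$ are both normalized to vanish at $\omega$, integrating the first-variation convergence along a smooth path from $\omega$ to $\phi$ upgrades pointwise convergence of the functionals to $C^\infty$ convergence; this is exactly the scheme in \cite{donaldson3, li, ST}.

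The main obstacle I expect is the bookkeeping of the first-order ($\mathcal{O}(k^{m-1})$ in the unrescaled variation) terms: both $\left(\bold{d}\mathbb{I}^k_{(\hxi,p)}\right)_\phi$ — see the correction term in \eqref{dI} involving $(d\dot\phi, d\log f^{-p+1}_{(\hxi,\phi)})_{g_\phi}$ — and the Laplacian/gradient terms coming from $\left(\bold{d}\,\Hilb^k_{(\hxi,p)}\right)_\phi$ in \eqref{dHilb} contribute at this order, and one must see that they cancel (or combine into an exact term) so that no spurious $\mathcal{O}(k^{m-1})$ remainder survives after dividing by $k^m$; this is the reason the particular combination \eqref{L-Z} of $I^k$, $\mathbb{I}^k$, and $\Hilb^k$ is chosen. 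A secondary technical point is justifying that the constants $b_k$ can indeed be chosen uniformly so that the limit exists — this follows by tracking the $\phi$-independent contributions $w^q_{\hxi}(L^k)$, $C_k$ and $\log C_k$ through their own Bergman-type asymptotics, which are themselves special cases of \Cref{TYZ-q} evaluated at the fixed reference metric.
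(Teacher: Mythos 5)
Your proposal follows essentially the same route as the paper's proof: the chain-rule computation in which the $\lambda(p)$ factors from \eqref{di} cancel against the $1/\lambda(p)$ in \eqref{dHilb} to reassemble $\rho_{(\hxi,p)}(k\phi)$, integration by parts to exhibit $\left(\bold{d}\mathcal{L}^k_{(\hxi,p)}\right)_\phi(\dot\phi)$ as an integral against $\rho_{(\hxi,p)}(k\phi)-C_k f^{-p+1}_{(\hxi,\phi)}$ (whence balanced metrics are critical points), and then \Cref{rho-exp} to identify the leading term with $2k^m\left(\bold{d}\mathcal{M}_{(\hxi,p)}\right)_\phi(\dot\phi)$. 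The only slip is the stray factor $f^{p+1}_{(\hxi,\phi)}$ multiplying $\rho_{(\hxi,p)}(k\phi)$ in your intermediate display (the weight is already built into $\rho$ by \eqref{rho(p)}), which does not affect the argument.
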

\begin{proof}
For a $\Hilb_{(\hxi,p)}^k$-orthonormal adapted basis $\{s_{i,\lambda}\mid \lambda\in\Lambda_k(\hxi),\,\,i=1,\cdots, n_k(\lambda)\}$ of $\mathcal{H}_k$, by \eqref{di} and \eqref{dHilb} we have,
\begin{align*}
&\bold{d}\left(I_{(\hxi,p)}^{k}\circ\Hilb_{(\hxi,p)}^{k}\right)_\phi(\dot{\phi})\\=& -\sum_{i,\lambda}\int_X |s_{i,\lambda}|_{k\phi}^{2}[2k\dot{\phi}-(d\log f^{-p+1}_{(\hxi,\phi)},d\dot{\phi})_{g_\phi}+\Delta_{\phi}\dot{\phi}]f^{-p+1}_{(\hxi,\phi)}\vol_{k\omega_\phi}\\
&=\int_X \rho_{(\hxi,p)}(k\phi)[2k\dot{\phi}-(d\log f^{-p+1}_{(\hxi,\phi)},d\dot{\phi})_{g_\phi}+\Delta_{\omega_{\phi}}\dot{\phi}]\vol_{k\omega_\phi}\\
&=-2k\int_X \dot{\phi}\left(1+\frac{\Delta_{\phi}}{2k}\right)\rho_{(\hxi,p)}(k\phi)\vol_{k\phi}+\int_X \rho_{(\hxi,p)}(k\phi)(d\log f^{-p+1}_{(\hxi,\phi)},d\dot{\phi})_{g_\phi}\vol_{k\omega_\phi}.
\end{align*}
By \eqref{dI} we get
\begin{align*}
\left(\bold{d}\mathcal{L}_{(\hxi,p)}^{k}\right)_\phi(\dot{\phi})=&-2k\int_X \dot{\phi}\left(1+\frac{\Delta_{\phi}}{2k}\right)\left[\rho_{(\hxi,p)}(k\phi)-C_k f^{-p+1}_{(\hxi,\phi)}\right]\vol_{k\omega_\phi}\\&+\int_X \left[\rho_{(\hxi,p)}(k\phi)-C_k f^{-p+1}_{(\hxi,\phi)}\right](d\log f^{-p+1}_{(\hxi,\phi)},d\dot{\phi})_{g_\phi}\vol_{k\omega_\phi}.
\end{align*}
From the above expression it is clear that a $(\hxi,p)$-balanced metric of order $k$ is critical point of $\mathcal{L}_{(\hxi,p)}^{k}$. By the asymptotic expansion in \Cref{rho-exp} we get
\begin{equation*}
\int_X \left[\rho_{(\hxi,p)}(k\phi)-C_k f^{-p+1}_{(\hxi,\phi)}\right](d\log f^{-p+1}_{(\hxi,\phi)},d\dot{\phi})_{g_\phi}\vol_{k\omega_\phi}=\mathcal{O}(k^{m-1}),
\end{equation*}
and
\begin{align*}
&2k\int_X\dot{\phi}\left(1+\frac{\Delta_{\phi}}{2k}\right)\left[\rho_{(\hxi,p)}(k\phi)-C_k f^{-p+1}_{(\hxi,\phi)}\right]\vol_{k\omega_\phi}\\=&2k^{m}\int_X\accentset{\circ}{{\rm Scal}}_{(\hxi,p)}(\phi)\dot{\phi}f^{-p+1}_{(\hxi,\phi)}\vol_{\omega_\phi}+\mathcal{O}(k^{m-1})\\
=&2k^{m}\left(\bold{d}\mathcal{M}_{(\hxi,p)}\right)_\phi(\dot{\phi})+\mathcal{O}(k^{m-1}).
\end{align*}
The proof is complete. 
\end{proof}

\begin{rem}
There is a natural extension of the momentum map interpretation of balanced Fubini-Study metrics given by S.~K.~Donaldson in \cite{donaldson0} to $(\hxi,p)$-balanced metrics. Indeed, let us identify $\mathcal{B}_{\hxi}(\mathcal{H}_k)$ with the space of bases of $\mathcal{H}_k$ compatible with the splitting \eqref{decomposition}, and denote by ${\rm Aut}^{\hxi}(X,L)$ the Lie group of automorphisms of the pair $(X,L)$ that commutes with the flow of $\hat{\xi}$. Let $\theta_{(\hxi,k)}$ denote the group representation of  ${\rm Aut}^{\hxi}(X,L)$ in ${\rm GL}(\mathcal{H}_k)$, given by 
\begin{equation*}
\theta_{(\hxi,k)}(\gamma)s:=\gamma\circ s\circ p(\gamma)^{-1},
\end{equation*}
where $p:{\rm Aut}(X,L)\rightarrow {\rm Aut}_{\rm red}(X)$ is the natural projection.
For each $k$ we have the following group actions on $\mathcal{B}_{\hxi}(\mathcal{H}_k)$: 
\begin{itemize}
\item[$\bullet$] $\mathbb{C}^{*}$ by scalar multiplications;
\item[$\bullet$]$\mathcal{A}_{(\hxi,k)}:=\theta_{(\hxi,k)}\left({\rm Aut}^{\hxi}(X,L)\right)$;
\item[$\bullet$]$\mathcal{G}_{(\hxi,k)}:=\left\{H\in\prod_{\lambda\in\Lambda_k(\hxi)} \mathbb{U}(\mathcal{H}_k(\lambda))\,\mid\, \prod_{\lambda}\det(g_\lambda)^{\lambda(p)}  =1\right\}$,
\end{itemize}
We consider the quotient space,
\begin{equation*}
\mathcal{Z}_{\hxi}(\mathcal{H}_k)=\bigslant{\mathcal{B}_{\hxi}(\mathcal{H}_k)}{\left(\mathbb{C}^{*}\times\mathcal{A}_{(\hxi,k)}\right)},
\end{equation*}
on which we have a natural action of $\mathcal{G}_{(\hxi,k)}$. The quotient $\mathcal{Z}_{\hxi}(\mathcal{H}_k)$ carries a natural K\"ahler structure, defined as follows:
\begin{itemize}
\item[$\bullet$]The multiplication by $\sqrt{-1}$ defines an integrable complex structure on $\mathcal{B}_{\hxi}(\mathcal{H}_k)$ invariant under the action of $\mathbb{C}^{*}\times \mathcal{A}_{(\hxi,k)}$, so it descends to a complex structure $J^{(k)}_\mathcal{Z}$ on the quotient $\mathcal{Z}_{\hxi}(\mathcal{H}_k)$.
\item[$\bullet$] There is a natural K\"ahler form on $\mathcal{B}_{\hxi}(\mathcal{H}_k)$ given by 
\begin{equation*}
\varpi^{(k)}_\mathcal{B}:=dd^{c}Z^{k}_{(\hxi,p)},
\end{equation*}
where $d^{c}:=J^{(k)}_\mathcal{B}d$. The form $\varpi^{(k)}_\mathcal{B}$ is invariant under the group actions of $\mathbb{C}^{*}\times \mathcal{A}_{(\hxi,k)}$ and $\mathcal{G}_{(\hxi,k)}$, so it defines a $\mathcal{G}_{(\hxi,k)}$-invariant K\"ahler form on $\mathcal{Z}_{\hxi}(\mathcal{H}_k)$.
\end{itemize}
We endow ${\rm Lie}(\mathcal{G}_{(\hxi,k)})$ with the pairing
\begin{equation*}
\langle a,b\rangle_{(\hat{\xi},p,k)}=\sum_{\lambda\in\Lambda(\hxi)}\lambda(p)\cdot\Tr\left(a_\lambda b^{\star}_\lambda\right),
\end{equation*}
and identify ${\rm Lie}(\mathcal{G}_{(\hxi,k)})$ with the dual vector space by using $\langle \cdot,\cdot\rangle_{(\hat{\xi},p,k)}$. The action of $\mathcal{G}_{(\hxi,k)}$ on $\mathcal{Z}_{\hxi}(\mathcal{H}_k)$ is Hamiltonian with $\varpi^{(k)}_\mathcal{Z}$-moment map $\underline{\mathcal{M}}_{(\hxi,p)}^{(k)}:\mathcal{Z}_{\hxi}(\mathcal{H}_k)\rightarrow {\rm Lie }(\mathcal{G}_{(\hxi,k)})$ given by
\begin{equation*}
\underline{\mathcal{M}}_{(\hxi,p)}^{(k)}(\textbf{s}):=\sqrt{-1}\left(\bigoplus_{\lambda\in\Lambda(\hxi)}\left(\Hilb_{(\hxi,p)}^k\left(\FS^{k}_{(\hxi,p)}(\textbf{s})\right)(s_{i,\lambda},s_{j,\lambda})\right)_{i,j=1,n(\lambda)}\right)_0
\end{equation*}
where for any $\textbf{s}\in\mathcal{B}_{\hxi}(\mathcal{H}_k)$ we identify $\textbf{s}$ with the unique positive definite Hermitian form so that $\textbf{s}$ is orthonormal, and for any $a\in {\rm Lie}(\mathcal{G}_{(\hxi,k)})$, 
\begin{equation*}
(a)_0=a-\frac{\langle a,\Id\rangle_{(\hat{\xi},p,k)}}{\langle \Id,\Id\rangle_{(\hat{\xi},p,k)}}\Id.
\end{equation*}
Thus the zeroes of the moment map $\underline{\mathcal{M}}_{(\hxi,p)}^{(k)}$ are the $(\hxi,p)$-balanced elements of $\mathcal{Z}_{\hxi}(\mathcal{H}_k)$.  
\end{rem}

\begin{lem}\label{Z-convex}
For all $\phi\in\mathcal{K}^{\xi}_\omega$ we have (in the $C^{\infty}$ sense),
\begin{equation}\label{Z-L=0}
\underset{k\rightarrow\infty}{\lim}k^{-m}\left[\mathcal{L}_{(\hxi,p)}^{k}(\phi)-Z_{(\hxi,p)}^{k}\circ \Hilb_{(\hxi,p)}^k(\phi)\right]=0.
\end{equation}
The functional $Z^{k}_{(\hxi,p)}$ is convex along the geodesics of $\mathcal{B}_{\hxi}(\mathcal{H}_k)$.
\end{lem}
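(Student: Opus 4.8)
The plan is to prove the two assertions independently, each by combining the variational identities \eqref{dI}, \eqref{di}, \eqref{i}, \eqref{d2I} with the asymptotic expansion of \Cref{rho-exp}.

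\emph{Proof of \eqref{Z-L=0}.} Unwinding the definitions \eqref{L-Z}, the common term $I^{k}_{(\hxi,p)}$ cancels and we are left with the telescoping identity
\[
\mathcal{L}^{k}_{(\hxi,p)}(\phi)-Z^{k}_{(\hxi,p)}\circ\Hilb^{k}_{(\hxi,p)}(\phi)=\mathbb{I}^{k}_{(\hxi,p)}(\phi)-\mathbb{I}^{k}_{(\hxi,p)}\bigl(\FS^{k}_{(\hxi,p)}\circ\Hilb^{k}_{(\hxi,p)}(\phi)\bigr).
\]
By \Cref{rho-exp} we may write $\FS^{k}_{(\hxi,p)}\circ\Hilb^{k}_{(\hxi,p)}(\phi)=\phi+\psi_{k}$ with $\psi_{k}=\mathcal{O}(k^{-2})$ in $C^{\infty}$, uniformly on $C^{\infty}$-bounded families of $\phi$ together with all derivatives in the $\phi$-direction. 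On the other hand, by the first-variation formula \eqref{dI} (equivalently \eqref{I}) together with the estimate $(2\pi)^{m}C_{k}=1+\mathcal{O}(k^{-2})$ obtained in the proof of \Cref{L-Mab}, the differential of $\mathbb{I}^{k}_{(\hxi,p)}$ — and likewise each of its higher functional derivatives — is $\mathcal{O}(k^{m+1})$, the factor $k^{m}$ coming from $\vol_{k\omega}=k^{m}\vol_{\omega}$. Integrating $-d\mathbb{I}^{k}_{(\hxi,p)}$ along the segment from $\phi$ to $\phi+\psi_{k}$ (which lies in $\mathcal{K}^{\xi}_{\omega}$ for $k\gg1$) and inserting $\psi_{k}=\mathcal{O}(k^{-2})$ gives, in the uniform functional ($C^{\infty}$) sense,
\[
\mathcal{L}^{k}_{(\hxi,p)}(\phi)-Z^{k}_{(\hxi,p)}\circ\Hilb^{k}_{(\hxi,p)}(\phi)=\mathcal{O}(k^{m-1}),
\]
and dividing by $k^{m}$ yields \eqref{Z-L=0}.

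\emph{Geodesic convexity of $Z^{k}_{(\hxi,p)}$.} Write $Z^{k}_{(\hxi,p)}=\mathbb{I}^{k}_{(\hxi,p)}\circ\FS^{k}_{(\hxi,p)}+I^{k}_{(\hxi,p)}$. The geodesics of $\mathcal{B}_{\hxi}(\mathcal{H}_k)$ are products over $\lambda\in\Lambda_k(\hxi)$ of geodesics $H_{\lambda}(t)=H_{\lambda}(0)^{1/2}\exp(tB_{\lambda})H_{\lambda}(0)^{1/2}$ in the symmetric space of positive Hermitian forms on $\mathcal{H}_k(\lambda)$; along such a path $\log\det H_{\lambda}(t)=\log\det H_{\lambda}(0)+t\,\Tr B_{\lambda}$ is affine, so by \eqref{i} the functional $I^{k}_{(\hxi,p)}$ is affine along geodesics of $\mathcal{B}_{\hxi}(\mathcal{H}_k)$, and it suffices to prove that $\mathbb{I}^{k}_{(\hxi,p)}\circ\FS^{k}_{(\hxi,p)}$ is convex along them. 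Fix such a geodesic $H_{t}$ and put $\phi_{t}:=\FS^{k}_{(\hxi,p)}(H_{t})\in\mathcal{K}^{\xi}_{\omega}$. Diagonalizing $H_{0}$ and $B=\bigoplus_{\lambda}B_{\lambda}$ block by block yields an adapted $H_{0}$-orthonormal basis $\{\sigma_{i}\}$ of $\mathcal{H}_k$ and real numbers $\mu_{i}$ with $\{e^{-\mu_{i}t/2}\sigma_{i}\}$ adapted $H_{t}$-orthonormal, whence by \Cref{Quant-map}
\[
2k\,\phi_{t}=\log\Bigl(\tfrac{1}{C_{k}}\sum_{i}e^{-\mu_{i}t}\,|\sigma_{i}|^{2}_{h^{k}}\Bigr).
\]
Replacing $t$ by $\mathrm{Re}\,\tau$, $\tau\in\mathbb{C}$, and the $e^{-\mu_{i}t/2}\sigma_{i}$ by the holomorphic sections $e^{-\mu_{i}\tau/2}\sigma_{i}$ of the pull-back of $L^{k}$ to $X\times\mathbb{C}$ exhibits $(x,\tau)\mapsto\phi_{\mathrm{Re}\,\tau}(x)$ as the restriction of a Fubini--Study potential pulled back along a holomorphic map $X\times\mathbb{C}\to\mathbb{P}^{N_{k}}$, hence plurisubharmonic; the Schur-complement computation exactly as in \cite{donaldson3} then gives the subgeodesic inequality
\[
\ddot{\phi}_{t}-|d\dot{\phi}_{t}|^{2}_{g_{\phi_{t}}}\ \geq\ 0\qquad\text{pointwise on }X .
\]
Since $C_{k}>0$ for $k\gg1$ (again by $(2\pi)^{m}C_{k}=1+\mathcal{O}(k^{-2})$) and $f_{(\hxi,\phi_{t})}>0$ by the choice of the normalizing constant $a$, the second-variation formula \eqref{d2I} gives
\[
\frac{d^{2}}{dt^{2}}\,\mathbb{I}^{k}_{(\hxi,p)}(\phi_{t})=2kC_{k}\int_{X}\bigl(\ddot{\phi}_{t}-|d\dot{\phi}_{t}|^{2}_{g_{\phi_{t}}}\bigr)\,f^{-p+1}_{(\hxi,\phi_{t})}\,\vol_{k\omega_{\phi_{t}}}\ \geq\ 0,
\]
so $\mathbb{I}^{k}_{(\hxi,p)}\circ\FS^{k}_{(\hxi,p)}$, and therefore $Z^{k}_{(\hxi,p)}$, is convex along geodesics of $\mathcal{B}_{\hxi}(\mathcal{H}_k)$.

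\emph{Main obstacle.} The first assertion is essentially bookkeeping with the $k$-scaling once \Cref{rho-exp} is granted; the one genuinely substantive point is the subgeodesic inequality for the Bergman path $\phi_{t}=\FS^{k}_{(\hxi,p)}(H_{t})$, which is the finite-dimensional convexity phenomenon underlying Donaldson's $Z$-functional. The argument above is unchanged from \cite{donaldson3,li,ST} precisely because the eigenspace splitting \eqref{decomposition} is respected by the relevant group actions, so the block structure causes no extra difficulty; the remaining care is merely to ensure the $\mathcal{O}(k^{-2})$ and $\mathcal{O}(k^{m-1})$ estimates are uniform in the functional $C^{\infty}$ sense, which follows from the uniform $C^{\infty}$-bounds of \Cref{rho-exp} and the smoothness of $\Hilb^{k}_{(\hxi,p)}$ and $\FS^{k}_{(\hxi,p)}$.
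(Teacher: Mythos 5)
Your proof is correct, and since the paper's own proof of this lemma is only a citation to \cite[Prop.~3.2.3]{ST} and \cite[Prop.~1]{donaldson3}, it is worth noting where your details diverge from the cited arguments. For \eqref{Z-L=0} you reduce, as the paper does, to estimating $\mathbb{I}^{k}_{(\hxi,p)}(\phi)-\mathbb{I}^{k}_{(\hxi,p)}(\FS^{k}_{(\hxi,p)}\circ\Hilb^{k}_{(\hxi,p)}(\phi))$, but you then use a direct mean-value bound ($\psi_k=\mathcal{O}(k^{-2})$ against a differential of size $\mathcal{O}(k^{m+1})$), whereas the Donaldson--Li--Sano--Tipler route exploits the concavity of $\mathbb{I}^{k}_{(\hxi,p)}$ along the Bergman path \eqref{pot} (part 3 of the preceding Lemma) to sandwich the difference between the two endpoint derivatives, each of which is $\mathcal{O}(k^{m-1})$; your version is simpler and does not need the concavity statement at all. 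For the convexity, you split off $I^{k}_{(\hxi,p)}$ as affine (correct, since $\log\det H_\lambda(t)$ is affine along geodesics) and establish the pointwise subgeodesic inequality $\ddot\phi_t-|d\dot\phi_t|^2_{g_{\phi_t}}\geq 0$ for the Bergman path via plurisubharmonicity over $X\times\C$; this is cleaner in the present weighted setting, since the positive weight $f^{-p+1}_{(\hxi,\phi_t)}$ and the constant $C_k>0$ then just ride along in \eqref{d2I}. By contrast, \cite{donaldson3} (and the argument the paper has in mind) proves the nonnegativity of the integral by completing the integrand to a sum of squares using the reproducing-kernel identity $|\nabla f|^2=2\sum_{i,\lambda}|(\nabla f,\nabla s_{i,\lambda})|^2$ valid at the Fubini--Study metric of $H(0)$ --- so your attribution of a ``Schur-complement computation'' to \cite{donaldson3} is slightly inaccurate, though the Schur-complement/psh argument itself is standard and valid. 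The one convention-sensitive point is the normalization of $|d\dot\phi_t|^2_{g_{\phi_t}}$ versus $|\partial\dot\phi_t|^2$ in the subgeodesic inequality; with the paper's conventions (under which \eqref{d2I} vanishes along paths induced by one-parameter subgroups) your inequality is the correct one.
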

\begin{proof}
The proof of the above Lemma is identical to the arguments of \cite[Propostion 3.2.3]{ST}, and \cite[Propostion 1]{donaldson3}.
\end{proof}

\begin{cor}\label{Z-bounded}
$(\hxi,p)$-balanced metrics of order $k$ minimizes the functional $Z^{k}_{(\hxi,p)}$ on $\mathcal{B}_{\hxi}(\mathcal{H}_k)$.
\end{cor}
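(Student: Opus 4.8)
The plan is to run Donaldson's argument from \cite{donaldson3} (and its adaptations in \cite{li,ST}) essentially verbatim. The first thing to record is that $\mathcal{B}_{\hxi}(\mathcal{H}_k)=\bigoplus_{\lambda\in\Lambda_k(\hxi)}\mathcal{B}_{\hxi}(\mathcal{H}_k(\lambda))$ is a complete, simply connected manifold of nonpositive sectional curvature: each factor is the (totally geodesic, geodesically convex) fixed-point set of the isometric $\T_\xi$-action on the symmetric space of positive-definite Hermitian forms on $\mathcal{H}_k(\lambda)$, whose geodesics have the form $H(t)=H_0^{1/2}\exp(tA)H_0^{1/2}$; in particular any two of its points are joined by a geodesic. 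Since $Z^{k}_{(\hxi,p)}$ is convex along these geodesics by \Cref{Z-convex}, it then suffices to prove that a $(\hxi,p)$-balanced metric of order $k$ is a \emph{critical point} of $Z^{k}_{(\hxi,p)}$: restricting $Z^{k}_{(\hxi,p)}$ to the geodesic joining such a critical point $H_\star$ to an arbitrary $H\in\mathcal{B}_{\hxi}(\mathcal{H}_k)$ gives a convex function of $t\in[0,1]$ with vanishing derivative at $t=0$, whence $Z^{k}_{(\hxi,p)}(H)\geq Z^{k}_{(\hxi,p)}(H_\star)$.

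The heart of the matter is therefore the first variation of $Z^{k}_{(\hxi,p)}$. Fix $H\in\mathcal{B}_{\hxi}(\mathcal{H}_k)$, put $\phi:=\FS^{k}_{(\hxi,p)}(H)$, pick an adapted $H$-orthonormal basis $\{s_{i,\lambda}\}$ of $\mathcal{H}_k$, and represent $\dot H$ by the block-diagonal $\hxi$-invariant Hermitian endomorphism $M=(M_\lambda)_{\lambda\in\Lambda_k(\hxi)}$ with $\dot H_\lambda=H_\lambda(M_\lambda\,\cdot\,,\cdot\,)$, so that $\bigl(\bold{d}I^{k}_{(\hxi,p)}\bigr)_H(\dot H)=\sum_\lambda\lambda(p)\Tr M_\lambda$ by \eqref{di}. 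Differentiating the defining identity $\exp(2k\phi)=\frac{1}{C_k}\sum_{i,\lambda}|s_{i,\lambda}|_{h^{k}}^{2}$ through the re-orthonormalisation, and using the identity $\sum_{i,\lambda}|s_{i,\lambda}|_{k\phi}^{2}=C_k$ built into the definition of $\FS^{k}_{(\hxi,p)}$, one gets $\bigl(\bold{d}\FS^{k}_{(\hxi,p)}\bigr)_H(\dot H)=\dot\phi$ with $2kC_k\,\dot\phi=-\sum_{\lambda}\sum_{i,j}(M_\lambda)_{ij}(s_{i,\lambda},s_{j,\lambda})_{k\phi}$. Substituting this into $\bigl(\bold{d}\mathbb{I}^{k}_{(\hxi,p)}\bigr)_\phi(\dot\phi)=2kC_k\int_X\dot\phi\,f^{-p+1}_{(\hxi,\phi)}\vol_{k\omega_\phi}$ from \eqref{I} and integrating term by term, the integral $\int_X(s_{i,\lambda},s_{j,\lambda})_{k\phi}f^{-p+1}_{(\hxi,\phi)}\vol_{k\omega_\phi}$ reproduces, up to the factor $\lambda(p)$, the matrix entries of $\bigl(\Hilb^{k}_{(\hxi,p)}(\phi)\bigr)_\lambda$ relative to $\{s_{i,\lambda}\}$ by \eqref{prod-scal}--\eqref{Hilb}; feeding the result back into \eqref{L-Z} gives
\begin{equation*}
\bigl(\bold{d}Z^{k}_{(\hxi,p)}\bigr)_H(\dot H)=\sum_{\lambda\in\Lambda_k(\hxi)}\lambda(p)\,\Tr\Bigl(\bigl[\Id_{\mathcal{H}_k(\lambda)}-H_\lambda^{-1}\bigl(\Hilb^{k}_{(\hxi,p)}(\FS^{k}_{(\hxi,p)}(H))\bigr)_\lambda\bigr]M_\lambda\Bigr).
\end{equation*}
This is precisely the moment-map identity of the Remark preceding \Cref{Z-convex}, with $\underline{\mathcal{M}}^{(k)}_{(\hxi,p)}(H)=\sqrt{-1}\bigl(\Hilb^{k}_{(\hxi,p)}(\FS^{k}_{(\hxi,p)}(H))\bigr)_0$. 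As $M$ ranges over all block-diagonal $\hxi$-invariant Hermitian endomorphisms, $\bigl(\bold{d}Z^{k}_{(\hxi,p)}\bigr)_H=0$ if and only if $\Hilb^{k}_{(\hxi,p)}(\FS^{k}_{(\hxi,p)}(H))=H$. In particular, if $\phi$ is $(\hxi,p)$-balanced of order $k$, then $H:=\Hilb^{k}_{(\hxi,p)}(\phi)$ satisfies $\FS^{k}_{(\hxi,p)}(H)=\phi$, hence $\Hilb^{k}_{(\hxi,p)}(\FS^{k}_{(\hxi,p)}(H))=\Hilb^{k}_{(\hxi,p)}(\phi)=H$; so $H$ is a critical point of $Z^{k}_{(\hxi,p)}$, and, by the first paragraph, a global minimum.

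The step I expect to be the main obstacle is the first-variation computation of the second paragraph: one must differentiate $\FS^{k}_{(\hxi,p)}$ carefully through the orthonormalisation, keep track of the weight $f^{-p+1}_{(\hxi,\phi)}$ and of the eigenspace decomposition \eqref{decomposition}, and --- this is where the specific normalizations \eqref{lambda-p} of $\lambda(p)$ and \eqref{C_k} of $C_k$ are indispensable --- check that the resulting Euler--Lagrange equation of $Z^{k}_{(\hxi,p)}$ is the balanced equation \emph{on the nose} rather than merely up to an undetermined positive scalar. (By the same normalizations $Z^{k}_{(\hxi,p)}$ is invariant under $H\mapsto cH$, so the minimum is automatically attained along the whole $\mathbb{C}^{*}$-orbit of $H$, consistently with the Remark.) Everything else is the standard geometry of the space of Hermitian metrics, exactly as in \cite{donaldson0,donaldson3,li,ST}.
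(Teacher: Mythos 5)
Your proposal is correct and follows essentially the same route as the paper: show the balanced metric is a critical point of $Z^{k}_{(\hxi,p)}$ by a first-variation computation along geodesics of $\mathcal{B}_{\hxi}(\mathcal{H}_k)$, then invoke the convexity from \Cref{Z-convex} to upgrade criticality to a global minimum. Your version is in fact slightly more complete, since you compute $\bigl(\bold{d}Z^{k}_{(\hxi,p)}\bigr)_H$ at an arbitrary point (identifying critical points exactly with fixed points of $\Hilb^{k}_{(\hxi,p)}\circ\FS^{k}_{(\hxi,p)}$, i.e.\ the moment-map picture) and you make explicit the geodesic connectedness of $\mathcal{B}_{\hxi}(\mathcal{H}_k)$, both of which the paper leaves implicit.
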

\begin{proof}
Let $H(t),t\in\mathbb{R}$ be a geodesic in $\mathcal{B}_{\hxi}(\mathcal{H}_k)$ with $H(0)=H$ and such that $h=\FS_{(\hxi,p)}^{k}(H)$ is $(\hxi,p)$-balanced. For a choice of an adapted $H$-orthonormal basis of $\mathcal{H}_k$ denoted by $\{s_{i,\lambda}\mid \lambda\in\Lambda_k(\hxi),\,\,i=1,\cdots, n(\lambda)\}$ we have the following expression for $H(t)$
\begin{equation*}
H(t)={\rm diag}\left(e^{tA_\lambda}\right)_{\lambda\in\Lambda_k(\hxi)},
\end{equation*} 
with $A_\lambda={\rm diag}(a_i(\lambda))_{i=1,n(\lambda)}$, $a_i(\lambda)\in\mathbb{R}$ and $\Tr(A_\lambda)=0$. We consider the family of K{\"a}hler potentials given by $\phi(t):=\FS_{(\hxi,p)}^{k}(H(t))$. The collection $\{e^{\frac{-ta_i(\lambda)}{2}}s_{i,\lambda}\mid \lambda\in\Lambda_k(\hxi),\,\,i=1,\cdots, n(\lambda)\}$ is an $H(t)$-orthonormal adapted base of $\mathcal{H}_k$, so  we have
\begin{equation*}
Z_{(\hxi,p)}^{k}(t):=Z_{(\hxi,p)}^{k}(H(t))=\mathbb{I}_{(\hxi,p)}^{k}(\phi(t)).
\end{equation*}
Using the fact that $\sum_{\lambda\in\Lambda_k(\hxi)}\sum_{i=0}^{n(\lambda)}|s_{i,\lambda}|_{h^{k}}^{2}=1$ (because $h=\FS_{(\hxi,p)}^{k}(H)$) we get,
\begin{equation*}
\dot{\phi}=-\frac{1}{2k}\sum_{\lambda\in\Lambda_k(\hxi)}\sum_{i=0}^{n(\lambda)}a_i(\lambda)|s_{i,\lambda}|_{h^{k}}^{2}.
\end{equation*}
If the Hermitian metric $h^{k}$ on $L^{k}$ corresponds to a $(\hxi,p)$-balanced metric $\phi\in\mathcal{K}_{\omega}^{\xi}$ of order $k$, we have,
\begin{eqnarray*}
\dfrac{d Z_{(\hxi,p)}^{k}}{dt}(0)&=&-C_k\int_X\sum_{i,\lambda}a_i(\lambda)|s_{i,\lambda}|_{h^{k}}^{2}f^{-p+1}_{(\hxi,\omega)}\vol_{k\omega}\\
&=&-C_k\sum_{i,\lambda}a_i(\lambda)\lambda(p)\parallel s_{i,\lambda}\parallel_{\Hilb_{(\hxi,p)}^{k}(\FS_{(\hxi,p)}^{k}(H))}^{2}\\
&=&-C_k\sum_{\lambda\in\Lambda_k(\hxi)}\lambda(p)\Tr(A_\lambda)=0.
\end{eqnarray*}
Thus, $H$ is a critical point of $Z_{(\hxi,p)}^{k}$ and by the convexity of $Z_{(\hxi,p)}^{k}$, $H$ is a minimum. 
\end{proof}

Now suppose that $\mathcal{K}^{\xi}_\omega$ contains a metric $\phi^{\star}$ with $(\hxi,p)$-scalar curvature. We will show in the following proposition that the metrics $\Hilb_{(\hxi,p)}^{k}(\phi^{\star})$ are almost balanced in the sense that they minimizes $Z_{(\hxi,p)}^{k}$, up to an error that goes to zero.
\begin{prop}\label{ZH}
For all $\phi\in\mathcal{K}^{\xi}_\omega$ there exists a smooth function $\varepsilon_\phi(k)$, such that $\underset{k\rightarrow\infty}{\lim}\varepsilon_\phi(k)=0$ in $C^{\ell}(X,\mathbb{R})$ and, 
\begin{equation*}
k^{-m}Z_{(\hxi,p)}^{k}\circ \Hilb_{(\hxi,p)}^{k}(\phi)\geq k^{-m}Z_{(\hxi,p)}^{k}\circ \Hilb_{(\hxi,p)}^{k}(\phi^{\star})+\varepsilon_\phi(k).
\end{equation*}
\end{prop}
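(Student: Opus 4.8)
The plan is to show that the inner product $\Hilb_{(\hxi,p)}^{k}(\phi^{\star})$ is an \emph{almost} minimizer of $Z_{(\hxi,p)}^{k}$ on $\mathcal{B}_{\hxi}(\mathcal{H}_k)$, up to an error of size $o(k^{m})$. The decisive point is that, because $\phi^{\star}$ has constant $(\hxi,p)$-scalar curvature, $\accentset{\circ}{{\rm Scal}}_{(\hxi,p)}(\phi^{\star})=0$, so the $\tfrac1k$-coefficient in the expansion of \Cref{rho-exp} vanishes and the \emph{balancing defect} $D_k:=(2\pi)^{m}\rho_{(\hxi,p)}(k\phi^{\star})-(2\pi)^{m}C_k f_{(\hxi,\phi^{\star})}^{-p+1}$ satisfies $\|D_k\|_{C^{\ell}}=\mathcal{O}(k^{-2})$ for every $\ell$ — one power of $k$ smaller than for a general potential. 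In the momentum-map language of the Remark above this says that $\Hilb_{(\hxi,p)}^{k}(\phi^{\star})$ is an approximate zero of $\underline{\mathcal{M}}_{(\hxi,p)}^{(k)}$, of size $\mathcal{O}(k^{-2})$ in operator norm relative to $\Hilb_{(\hxi,p)}^{k}(\phi^{\star})$ and hence of size $\mathcal{O}\bigl(k^{-2}(\dim\mathcal{H}_k)^{1/2}\bigr)=\mathcal{O}(k^{m/2-2})$ in the pairing $\langle\cdot,\cdot\rangle_{(\hxi,p,k)}$; equivalently, the gradient of $Z_{(\hxi,p)}^{k}$ at $\Hilb_{(\hxi,p)}^{k}(\phi^{\star})$ has $\langle\cdot,\cdot\rangle_{(\hxi,p,k)}$-norm $\mathcal{O}(k^{m/2-2})$.

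The heart of the matter is to produce, for $k\gg1$, a genuine $(\hxi,p)$-balanced metric $R_k$ of order $k$ — an actual zero of $\underline{\mathcal{M}}_{(\hxi,p)}^{(k)}$ — lying close to $\Hilb_{(\hxi,p)}^{k}(\phi^{\star})$. I would obtain it by a quantitative inverse-function / Newton-iteration argument on the quotient $\mathcal{Z}_{\hxi}(\mathcal{H}_k)=\mathcal{B}_{\hxi}(\mathcal{H}_k)/(\mathbb{C}^{*}\times\mathcal{A}_{(\hxi,k)})$: transverse to the $\mathcal{A}_{(\hxi,k)}$-orbit, the linearization of $\underline{\mathcal{M}}_{(\hxi,p)}^{(k)}$ at $\Hilb_{(\hxi,p)}^{k}(\phi^{\star})$ is uniformly invertible in $k$ with inverse of at most polynomial norm, because the underlying second-order, Lichnerowicz-type operator at a constant $(\hxi,p)$-scalar curvature metric is invertible modulo the complexified ideal of holomorphic vector fields with zeros, exactly as in the cscK situation treated in \cite{li} (and in \cite{ST} for the extremal analogue). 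Feeding $\|D_k\|=\mathcal{O}(k^{-2})$ into this scheme bounds $d\bigl(\Hilb_{(\hxi,p)}^{k}(\phi^{\star}),R_k\bigr)$ in $(\mathcal{B}_{\hxi}(\mathcal{H}_k),\langle\cdot,\cdot\rangle_{(\hxi,p,k)})$ by $\mathcal{O}(k^{-\alpha})(\dim\mathcal{H}_k)^{1/2}$ for a suitable $\alpha>0$, which can be arranged so that, multiplied by the $\mathcal{O}(k^{m/2-2})$-bound of the previous paragraph, the product is $o(k^{m})$. This construction, and above all the uniform-in-$k$ invertibility of the linearized moment map transverse to $\mathcal{A}_{(\hxi,k)}$, is the main obstacle — precisely where \cite{li,ST} strengthened \cite{donaldson3}, and where one needs the right spectral properties of the $(\hxi,a,p)$-version of the Lichnerowicz operator; the remaining estimates are routine bookkeeping of powers of $k$ through the $C^{\infty}$-expansion of \Cref{rho-exp}.

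Granting $R_k$, the proposition follows as in \cite{donaldson3,li,ST}. By \Cref{Z-bounded}, $R_k$ minimizes $Z_{(\hxi,p)}^{k}$, so $Z_{(\hxi,p)}^{k}\circ\Hilb_{(\hxi,p)}^{k}(\phi)\ge Z_{(\hxi,p)}^{k}(R_k)$ for every $\phi\in\mathcal{K}^{\xi}_\omega$. On the other hand, applying the convexity of $Z_{(\hxi,p)}^{k}$ (\Cref{Z-convex}) along the unit-speed geodesic $\gamma$ of $\mathcal{B}_{\hxi}(\mathcal{H}_k)$ joining $\Hilb_{(\hxi,p)}^{k}(\phi^{\star})$ to $R_k$, and using that $t\mapsto Z_{(\hxi,p)}^{k}(\gamma(t))$ is convex and attains its minimum at the endpoint $t=d\bigl(\Hilb_{(\hxi,p)}^{k}(\phi^{\star}),R_k\bigr)$ (where its derivative vanishes), one gets $0\le Z_{(\hxi,p)}^{k}\bigl(\Hilb_{(\hxi,p)}^{k}(\phi^{\star})\bigr)-Z_{(\hxi,p)}^{k}(R_k)\le d\bigl(\Hilb_{(\hxi,p)}^{k}(\phi^{\star}),R_k\bigr)\cdot\bigl\|(\grad\,Z_{(\hxi,p)}^{k})_{\Hilb_{(\hxi,p)}^{k}(\phi^{\star})}\bigr\|=o(k^{m})$. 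Combining the two inequalities, $Z_{(\hxi,p)}^{k}\circ\Hilb_{(\hxi,p)}^{k}(\phi)\ge Z_{(\hxi,p)}^{k}\circ\Hilb_{(\hxi,p)}^{k}(\phi^{\star})-o(k^{m})$, and dividing by $k^{m}$ gives the statement with $\varepsilon_\phi(k):=-k^{-m}\bigl(Z_{(\hxi,p)}^{k}(\Hilb_{(\hxi,p)}^{k}(\phi^{\star}))-Z_{(\hxi,p)}^{k}(R_k)\bigr)\to0$, a bound which here does not even depend on $\phi$. (One can also bypass $R_k$ by applying convexity of $Z_{(\hxi,p)}^{k}$ directly along the geodesic from $\Hilb_{(\hxi,p)}^{k}(\phi^{\star})$ to $\Hilb_{(\hxi,p)}^{k}(\phi)$, which reduces matters to the lower bound $({\bold d}Z_{(\hxi,p)}^{k})_{\Hilb_{(\hxi,p)}^{k}(\phi^{\star})}(\dot\gamma(0))\ge-o(k^{m})$ and uses Donaldson's fact that Fubini--Study images of $\mathcal{B}_{\hxi}(\mathcal{H}_k)$-geodesics approximate Mabuchi geodesics; but then one must track the $I_{(\hxi,p)}^{k}$-contribution along a non-trace-free geodesic, which is less clean.)
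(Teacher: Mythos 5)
Your overall shape (convexity of $Z^{k}_{(\hxi,p)}$ combined with the almost-balanced property of $\Hilb^{k}_{(\hxi,p)}(\phi^{\star})$ from \Cref{rho-exp}) is right, but the route you actually commit to has a genuine gap: the construction of the honest $(\hxi,p)$-balanced metrics $R_k$. The uniform-in-$k$ invertibility of the linearized moment map transverse to the $\mathcal{A}_{(\hxi,k)}$-orbit, and the ensuing Newton/inverse-function scheme, is precisely the hard analytic content of Donaldson's \emph{Scalar curvature and projective embeddings~I} (and of its extensions to non-discrete automorphism groups); it is proved nowhere in this paper, it is not a routine consequence of the invertibility of the Lichnerowicz-type operator modulo holomorphic vector fields, and in the presence of the torus $\T_\xi\subset{\rm Aut}_{\rm red}(X)$ genuine balanced metrics need not exist at all without further stability-type hypotheses. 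The whole point of the Li/Sano--Tipler strategy this section follows is to \emph{avoid} producing $R_k$: note that \Cref{prop-bal} only goes in the direction ``balanced $\Rightarrow$ constant $(\xi,a,p)$-scalar curvature in the limit'', never the converse. As written, your main argument therefore rests on an unproved (and, in this equivariant setting, possibly false) existence theorem, and the claimed bound $d\bigl(\Hilb^{k}_{(\hxi,p)}(\phi^{\star}),R_k\bigr)=\mathcal{O}(k^{-\alpha})(\dim\mathcal{H}_k)^{1/2}$ is asserted rather than derived.

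The alternative you relegate to your final parenthesis is in fact the paper's proof, and it is cleaner than you fear. Writing $H_k=\Hilb^{k}_{(\hxi,p)}(\phi)={\rm diag}(e^{A_\lambda})$ in an adapted $H^{\star}_k$-orthonormal basis with $\Tr(A_\lambda)=0$, one sets $\varepsilon_\phi(k):=k^{-m}P_k'(0)$ where $P_k(t)=Z^{k}_{(\hxi,p)}(H_k(t))$ along the geodesic joining $H^{\star}_k$ to $H_k$; convexity (\Cref{Z-convex}) gives the stated inequality at once, and the only point to check is $P_k'(0)=\mathcal{O}(k^{m-1})$. That follows from three elementary facts: the exact identity $\int_X\rho_A(k\phi^{\star})\vol_{k\omega_{\phi^{\star}}}=\sum_{\lambda}\lambda(p)\Tr(A_\lambda)=0$ kills the leading term; the expansion $\rho_{(\hxi,p)}(k\phi^{\star})=f^{-p+1}_{(\hxi,\phi^{\star})}+\mathcal{O}(k^{-2})$ (valid because $\phi^{\star}$ has constant $(\hxi,p)$-scalar curvature) leaves only an $\mathcal{O}(k^{-2})$ remainder in the integrand; and the crude comparison of $h^k_\phi$ with $h^k_{\phi^{\star}}$ in \eqref{eq-T} gives $|a_i(\lambda)|\leq 2C_\phi k+B_\phi$, which controls $\rho_A(k\phi^{\star})$ by $\mathcal{O}(k)$. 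No Lichnerowicz operator, no Newton iteration, and no trace-tracking issue for $I^{k}_{(\hxi,p)}$ arises (the geodesic is trace-free by construction). You should replace your main argument by this one.
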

\begin{proof}
We denote $H_k=\Hilb_{(\hxi,p)}^k(\phi)$ and $H^{\star}_k=\Hilb_{(\hxi,p)}^k(\phi^{\star})$. For a choice of an adapted $H^{\star}_k$-orthonormal basis $\{s_{i,\lambda}\}$ of $\mathcal{H}_k$ we can write $H_k={\rm diag}(e^{A_\lambda})_{\lambda\in\Lambda_k(\hxi)}$ with $A_\lambda={\rm diag}(a_i(\lambda))_{i=1,n(\lambda)}$, $\Tr(A_\lambda)=0$, and consider the geodesic that joins $H^{\star}_k$ to $H_k$,
\begin{equation*}
H_k(t)={\rm diag}\left(e^{tA_\lambda}\right)_{\lambda\in\Lambda_k(\hxi)}.
\end{equation*} 
Let $P_k(t):=Z_{(\hxi,p)}^{k}(H_k(t))$. $P_k(t)$ is a convex function by \Cref{Z-convex}. It follows that,
\begin{equation*}
k^{-m}\left(Z_{(\hxi,p)}^{k}(H_k)-Z_{(\hxi,p)}^{k}(H^{\star}_k)\right)\geq k^{-m}P^{\prime}_k(0).
\end{equation*}
Letting $\varepsilon_\phi(k):=k^{-m}P^{\prime}_k(0)$, we have 
\begin{equation*}
P^{\prime}_k(0)=2kC_k\int_X\dot{\phi}f_{(\hxi,\phi^\star)}^{-p+1}\vol_{k\omega_{\phi^\star}}=-C_k\int_X\frac{\rho_A(k\phi^\star)}{\rho_{(\hxi,p)}(k\phi^\star)}f_{(\hxi,\phi^\star)}^{-p+1}\vol_{k\omega_{\phi^\star}},
\end{equation*}
where
\begin{equation}\label{s}
\rho_A(k\phi^\star)=\sum_{i,\lambda}a_i(\lambda)f_{(\hxi,\phi^\star)}^{-p+1}|s_{i,\lambda}|_{k\phi^\star}^{2}.
\end{equation}
By \Cref{rho-exp}, since $\phi^{\star}$ has constant $(\hxi,p)$-scalar curvature we get
\begin{equation}\label{ss}
\rho_{(\hxi,p)}(k\phi^\star)=f_{(\hxi,\phi^\star)}^{-p+1}+\mathcal{O}(k^{-2}),
\end{equation}
and therefore we obtain
\begin{equation*}
P^{\prime}_k(0)=-C_k\int_X\rho_A(k\phi^\star)\mathcal{O}(k^{-2})\vol_{k\omega_{\phi^\star}}.
\end{equation*}
We have
\begin{equation}\label{eq-T}
e^{a_i(\lambda)}=\parallel s_{i,\lambda}\parallel_{H_k}^{2}=\sum_{\lambda\in\Lambda_k(\hxi)}\lambda(p)^{-1}\int_X|s_{i,\lambda}|_{k\phi}^{2}f_{(\hxi,\phi)}^{-p+1}\vol_{k\omega_\phi}.
\end{equation}
As $h_{\phi}^k=e^{-2k(\phi^{\star}-\phi)}h_{\phi^{\star}}^k$, there exists a constant $C_\phi>0$ such that 
\begin{equation}\label{est}
e^{-2kC_\phi}h_{\phi^{\star}}^k\leq h_{\phi}^k\leq e^{2kC_\phi}h_{\phi^{\star}}^k.
\end{equation}
By the fact that $f_{(\hxi,\phi)}^{-p+1}/f_{(\hxi,\phi^{\star})}^{-p+1}$ is bounded by positive constants (independent from $\phi$), and $\vol_{k\omega_\phi}/\vol_{k\omega_{\phi^\star}}$ is bounded by positive constants depending only on $\phi$, using \eqref{est} we obtain from \eqref{eq-T} the following estimate
\begin{equation}\label{sss}
-2C_\phi k+ B^{\prime}_\phi\leq a_i(\lambda)\leq 2C_\phi k+ B_\phi,
\end{equation}
where $B_\phi, B^{\prime}_\phi$ are real constants depending only on $\phi,\phi^{\star}$. We derive from \eqref{s} and \eqref{sss} that, 
\begin{equation*}
(-2C_\phi k+B^{\prime}_\phi)\rho_{(\hxi,p)}(k\phi^\star) \leq \rho_A(k\phi^\star)\leq (2C_\phi k+ B_\phi)\rho_{(\hxi,p)}(k\phi^\star).
\end{equation*}
Using \eqref{ss} we infer
\begin{equation*}
(-2C_\phi k+B^{\prime}_\phi)f_{(\hxi,\phi^\star)}^{-p+1}+\mathcal{O}(k^{-1}) \leq \rho_A(k\phi^\star)\leq (2C_\phi k+ B_\phi)f_{(\hxi,\phi^\star)}^{-p+1}+\mathcal{O}(k^{-1}),
\end{equation*}
which shows that $\underset{k\rightarrow\infty}{\lim}\varepsilon_\phi(k)=0$.
\end{proof}

\subsection{Proof of \Cref{Mabuchi-bounded}}
Now we are in position to give the proof of \Cref{Mabuchi-bounded} which is very similar to \cite[Theorem 3.4.1]{ST}.
\begin{proof}
Let $\phi^{\star}\in\mathcal{K}^{\xi}_{\omega}$ the K\"ahler potential of a metric with constant $(\hxi,p)$-scalar curvature. For any $\phi\in\mathcal{K}^{\xi}_{\omega}$, by \Cref{Z-bounded} we have
\begin{eqnarray*}
\mathcal{L}^{k}_{(\hxi,p)}(\phi)&=&Z^{k}_{(\hxi,p)}(\Hilb_{(\hxi,p)}^k(\phi))+\left[\mathcal{L}^{k}_{(\hxi,p)}(\phi)-Z^{k}_{(\hxi,p)}(\Hilb_{(\hxi,p)}^k(\phi))\right]\\
&\geq& Z_{(\hxi,p)}^{k}( \Hilb_{(\hxi,p)}^k(\phi^{\star}))+k^{m}\varepsilon_\phi(k)+\left[\mathcal{L}^{k}_{(\hxi,p)}(\phi)-Z^{k}_{(\hxi,p)}(\Hilb_{(\hxi,p)}^k(\phi))\right].
\end{eqnarray*}
Thus,
\begin{eqnarray*}
\frac{2}{k^{m}}\mathcal{L}^{k}_{(\hxi,p)}(\phi)+b_k&\geq& \frac{2}{k^{m}}\mathcal{L}^{k}_{(\hxi,p)}(\phi^{\star})+b_k+\frac{2}{k^{m}}\left[Z_{(\hxi,p)}^{k}( \Hilb_{(\hxi,p)}^k(\phi^{\star}))- \mathcal{L}^{k}_{(\hxi,p)}(\phi^{\star})\right]\\&&+\varepsilon_\phi(k)+\frac{2}{k^{m}}\left[\mathcal{L}^{k}_{(\hxi,p)}(\phi)-Z^{k}_{(\hxi,p)}(\Hilb_{(\hxi,p)}^k(\phi))\right].
\end{eqnarray*}
Using \Cref{L-Mab} and \ref{ZH} together with \eqref{Z-L=0}, by letting $k$ go to infinity we get,
\begin{equation*}
\mathcal{M}_{(\hxi,p)}(\phi)\geq\mathcal{M}_{(\hxi,p)}(\phi^{\star}).
\end{equation*}
\end{proof}

\section{The conformally K\"ahler, Einstein--Maxwell metrics on ruled surfaces}
In this section, we give the proof of the \Cref{c:main} from the Introduction. 
\subsection{The Calabi construction of cKEM metrics on ruled surfaces}  Let $X= \mathbb{P}(\cO \oplus \cL) \to C$ be a geometrically ruled complex surface over a compact complex curve $C$ of genus ${\bf g}\geq 2$. Following \cite{KoTo16}, cKEM metrics can be constructed by using the Calabi ansatz~\cite{acgt, calabi, hwang-singer}:  Let $(g_C, \omega_C)$ be a  K\"ahler metric on $C$ with constant scalar curvature $4(1-{\bf g})$,  where $\ell = {\rm deg}(\cL) >0$ is the degree of $\cL$. We denote by $\theta$ the connection $1$-form on the principal ${\mathbb S}^1$-bundle $P$ over $C$,  with curvature $d\theta =  \ell \omega_C$. Notice that $P$ can be identified with the unitary bundle of $(\cL^*, h^*)$ over $C$, where $h^*$ is the Hermitian metric with Chern curvature $-\ell \omega_C$; viewing equivalently  $X$ as a compactification at infinity of $\cL^* \to C$  (i.e. $X=\mathbb{P}(\cL^* \oplus \cO)$), we can introduce a class of K\"ahler metrics on $X$ by
\begin{equation}\label{calabi-ansatz}
g = \ell (z + \kappa) g_C + \frac{dz^2}{\Theta(z)} + \Theta(z) \theta^2, \, \omega= \ell(z+\kappa)\omega_C + dz\wedge \theta,
\end{equation}
where:  $z \in [-1,1]$ is a momentum variable for the ${\mathbb S}^1$-action on $\cL^*$, $\Theta(z)$ is a smooth function on $[-1,1]$, called  a {\it profile function} \cite{hwang-singer},  which satisfies the first order boundary conditions 
\begin{equation}\label{first-order}
\Theta(\pm 1)=0, \ \Theta'(\pm 1)= \mp 2, 
\end{equation} 
and the positivity condition 
\begin{equation}\label{positivity}
\Theta(z)>0 \ {\rm on} \ (-1,1).
\end{equation}
Here $\kappa>1$ is a real constant which parametrizes the  K\"ahler class 
$$\Omega_\kappa = [\omega] = 2\pi \Big(c_1(\cO(2)_{\mathbb{P}(\cO \oplus \cL)})  +(1+ \kappa)\ell [\omega_C]\Big).$$
Notice that for the ruled surfaces we consider $H^2(X, \R) \cong   \R^2$, so that any K\"ahler class on $X$ can be written as $\lambda \Omega_\kappa$ for some $\lambda>0$ and $\kappa>1$, see \cite{fujiki}. Furthermore, $\Omega_\kappa$ is homothetic to a Hodge class if and only if $\kappa\in(1,+\infty)\cap\mathbb{Q}$.\\

\smallskip

For any $|b|>1$, $f=|z+b|$ is a positive Killing potential with respect to \eqref{calabi-ansatz}  which  corresponds up to sign to the  Killing vector field $\xi$ generating the $\mathbb{S}^1$-action on $X= \mathbb{P}(\cO \oplus \cL)$ by multiplications of the  first factor $\cO$; the additive constant $b$ is simply an affine-linear modification of the normalizing constant $a$ for the Killing potentials of $\xi$. The main results  of  \cite{KoTo16}  can be summarized as follows

\begin{prop}\cite{KoTo16}\label{Koca-Tonnesen-Friedman} Let $X=\mathbb{P}(\cO \oplus \cL)\to C$ be a ruled complex surface as above. 
\begin{enumerate}
\item[$\bullet$] For any $\kappa>1$, the Futaki invariant $\mathfrak{F}_{(\Omega_\kappa, \xi, b)}$ of \cite{AM} vanishes if and only if $b$ satisfies
\begin{equation}\label{futaki=0}
\kappa= \frac{1+b^2}{2b}.
\end{equation}
We denote by $b_\kappa>1$ the unique solution of \eqref{futaki=0} satisfying $|b|>1$.
\item[$\bullet$] There exits a polynomial $P_{\kappa}(z)$ of degree $\leq 4$ such that $\Theta(z) =P_{\kappa}(z)/(z+\kappa)$ satisfies the first order boundary conditions \eqref{first-order} and, on any open subset when $\Theta(z)>0$, the metric \eqref{calabi-ansatz} is conformal to a cKEM metric with conformal factor $(z+b_\kappa)^{-2}$.
\item[$\bullet$] There exists  $\kappa_0(X)  \in (1,+\infty)$  such that 
\begin{enumerate}
\item[\rm (a)] for each $\kappa \in (\kappa_0(X), +\infty)$ the corresponding polynomial $P_{\kappa}(z)>0$ on $(-1,1)$, i.e. $\Omega_\kappa$ admits a K\"ahler metric of the form \eqref{calabi-ansatz} with $\Theta(z)=P_\kappa(z)/(z+\kappa)$, such that $(z+ b_\kappa)^{-2}g$ is cKEM;
\item[\rm (b)] for each $\kappa \in (1,\kappa_0(X))$ the corresponding polynomial $P_{\kappa}(z)$ is negative somewhere on $(-1,1)$;
\item[\rm (c)] for $\kappa=\kappa_0(X)$ the corresponding polynomial $P_{\kappa}(z)$ is non-negative and has a zero with multiplicity $2$ on $(-1,1)$.
\end{enumerate}
\end{enumerate}
\end{prop}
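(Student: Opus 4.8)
The plan is to reduce all three assertions to elementary one‑variable problems via the Calabi ansatz \eqref{calabi-ansatz}. For a metric of the form \eqref{calabi-ansatz} with $f=z+b$, the standard Hwang--Singer type formulas (specialised to the linear momentum profile $z+\kappa$) express everything through the single function $P(z):=(z+\kappa)\Theta(z)$:
\begin{equation*}
{\rm Scal}_g=\frac{{\rm Scal}_{g_C}-P''(z)}{z+\kappa},\qquad \Delta_g f=-\frac{P'(z)}{z+\kappa},\qquad |\xi|^2_g=\Theta(z)=\frac{P(z)}{z+\kappa},
\end{equation*}
where $\Delta_g$ is the (positive) Riemannian Laplacian and ${\rm Scal}_{g_C}=4(1-{\bf g})$ is constant. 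Moreover, the push‑forward of $\vol_\omega$ under the moment map of $\xi$ is a constant multiple of $(z+\kappa)\,dz$, so that every integral $\int_X h(z)\,\vol_\omega$ reduces, up to a universal positive constant, to $\int_{-1}^1 h(z)(z+\kappa)\,dz$.

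For the first bullet I would compute the Apostolov--Maschler Futaki invariant $\mathfrak F_{(\Omega_\kappa,\xi,b)}$ — a metric‑independent linear functional on Killing potentials — by evaluating it on $f=z+b$ and using the fibre integration above; this turns it into an explicit rational expression in $\kappa$ and $b$ whose vanishing is equivalent to $b^2-2\kappa b+1=0$, i.e.\ to \eqref{futaki=0}. For $\kappa>1$ this quadratic has two positive roots of product $1$, so exactly one of them, $b_\kappa=\kappa+\sqrt{\kappa^2-1}$, satisfies $|b|>1$. For the second bullet I fix $b=b_\kappa$ and impose ${\rm Scal}_{(\xi,a,4)}(g)=c$ with $c=c_{(\Omega_\kappa,\xi,a,4)}$ as in \eqref{top-const}; substituting the formulas above and clearing the factor $(z+\kappa)^{-1}$ produces the second order linear ODE
\begin{equation*}
(z+b)^2\bigl({\rm Scal}_{g_C}-P''\bigr)+6(z+b)P'-12P=c\,(z+\kappa).
\end{equation*}
For $d\ge 3$ the coefficient of $z^d$ on the left equals $-(d-3)(d-4)$ times the coefficient of $z^d$ in $P$, which forces that coefficient to vanish as soon as $d\ge 5$; hence every polynomial solution has degree $\le 4$. (This is the arithmetic coincidence driving the construction: for a general weight $p$ in place of $4$ the factor would be $(d-p)(d-p+1)$, so the bound $\deg P\le p$ here comes from $p=2m=4$.) Matching the remaining coefficients (degrees $0,1,2$) and imposing the boundary conditions \eqref{first-order}, which translate into $P(\pm1)=0$, $P'(1)=-2(1+\kappa)$, $P'(-1)=2(\kappa-1)$, gives a linear system whose solvability is precisely the condition $\mathfrak F_{(\Omega_\kappa,\xi,b_\kappa)}=0$ and which then has a unique solution $P_\kappa$. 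By construction, on any subinterval of $(-1,1)$ where $P_\kappa>0$ the function $\Theta=P_\kappa/(z+\kappa)$ is a positive profile satisfying \eqref{first-order}, so \eqref{calabi-ansatz} defines a K\"ahler metric of constant $(\xi,a,4)$‑scalar curvature, and hence, after the conformal change by $(z+b_\kappa)^{-2}$, a cKEM metric.

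For the third bullet, since $P_\kappa(\pm1)=0$ I would write $P_\kappa(z)=(1-z^2)Q_\kappa(z)$ with $\deg Q_\kappa\le 2$; the derivative conditions become $Q_\kappa(1)=1+\kappa>0$ and $Q_\kappa(-1)=\kappa-1>0$, and since $1-z^2>0$ and $z+\kappa>0$ on $(-1,1)$ the positivity condition \eqref{positivity} is equivalent to $Q_\kappa>0$ on $(-1,1)$. Using the explicit expressions for the coefficients of $Q_\kappa$ as functions of $\kappa$ (and of the fixed data ${\bf g},\ell,b_\kappa$) coming from the linear system above, I would analyse $\mu(\kappa):=\min_{[-1,1]}Q_\kappa$: because $Q_\kappa(\pm1)>0$, a sign change of $Q_\kappa$ on $(-1,1)$ forces it to have two roots there, governed by the discriminant of $Q_\kappa$. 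One then shows — and this is where the sign ${\rm Scal}_{g_C}=4(1-{\bf g})<0$ is essential — that $\mu(\kappa)<0$ for $\kappa$ near $1$, that $\mu(\kappa)>0$ for $\kappa$ large, and that the sign of $\mu$ changes exactly once; setting $\kappa_0(X)$ to be this unique zero yields (a) for $\kappa>\kappa_0(X)$ and (b) for $\kappa\in(1,\kappa_0(X))$, while at $\kappa=\kappa_0(X)$ the minimum is attained in the open interval with value $0$, i.e.\ $P_{\kappa_0(X)}$ has a double root in $(-1,1)$, giving (c).

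The main obstacle is the analysis in the third bullet: extracting the explicit formula for $Q_\kappa$ from the linear system and then controlling the sign of its minimum over $[-1,1]$ uniformly in $\kappa$ — in particular proving that the transition occurs at a single value $\kappa_0(X)$ and pinning down the limiting behaviour as $\kappa\to1^+$ and $\kappa\to+\infty$. This is an elementary but delicate computation and is the technical heart of \cite{KoTo16}; by contrast, once the reduction to the momentum variable is in place the derivation of the ODE and of the Futaki relation in the first two bullets — while demanding care with the normalisations and signs — is comparatively routine.
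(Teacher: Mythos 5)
The paper itself offers no proof of this statement: it is quoted as a package of results from \cite{KoTo16} (``The main results of \cite{KoTo16} can be summarized as follows''), so there is no internal argument to compare yours against. Your reconstruction does follow the actual route of Koca--T{\o}nnesen-Friedman and of the Calabi-ansatz computations in \cite{acgt}: reduction to the momentum variable, the second-order ODE for $P=(z+\kappa)\Theta$, identification of the one-dimensional obstruction in the resulting boundary-value problem with the Futaki invariant, and the factorization $P_\kappa=(1-z^2)Q_\kappa$. The checkable details are right: the coefficient of $z^d$ contributed by the top-degree term of $P$ is $-(d-3)(d-4)a_d$, so $\deg P\le 4$; the quadratic $b^2-2\kappa b+1=0$ has positive reciprocal roots, so exactly one exceeds $1$; the left-hand side of your ODE is $(z+b)^2 S_C-(z+b)^5\bigl(P/(z+b)^3\bigr)''$, so it integrates in closed form with two constants plus $c$ against four boundary conditions, leaving one compatibility condition; and \eqref{first-order} gives $Q_\kappa(1)=1+\kappa$, $Q_\kappa(-1)=\kappa-1$.

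The one genuine gap is in the third bullet. Continuity of $\mu(\kappa)=\min_{[-1,1]}Q_\kappa$ together with $\mu<0$ near $\kappa=1$ and $\mu>0$ for large $\kappa$ only produces \emph{some} value where the sign changes; the proposition asserts the much stronger trichotomy that $\{\kappa:\ P_\kappa>0\ \text{on}\ (-1,1)\}$ is exactly an interval $(\kappa_0(X),+\infty)$, i.e.\ that $\mu$ changes sign exactly once. You flag this as the technical heart but supply no mechanism: one must write $Q_\kappa$ explicitly (its coefficients are algebraic in $\kappa$ through $b_\kappa=\kappa+\sqrt{\kappa^2-1}$ and depend on ${\bf g}$) and prove a monotonicity statement --- e.g.\ that the interior minimum of $Q_\kappa$, or the relevant discriminant, is increasing in $\kappa$ --- which is precisely the computation carried out in \cite{KoTo16}. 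Without it, the existence of a single threshold $\kappa_0(X)$ satisfying (a)--(c) does not follow. Note also that even the endpoint behaviour of your intermediate-value argument is unestablished as written: $Q_\kappa(-1)=\kappa-1\to 0$ as $\kappa\to 1^+$ does not by itself force an interior sign change, and positivity for large $\kappa$ likewise requires the explicit formula.
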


\subsection{The $(\xi, b_\kappa,4)$-Mabuchi energy} For a given $\kappa>0$, the K\"ahler metrics \eqref{calabi-ansatz} define a Frech\'et subspace ${\mathcal Cal}_{\omega_\kappa}$, modelled on smooth profile functions $\Theta(z)$ satisfying \eqref{first-order} and \eqref{positivity}, of $\omega_\kappa$-compatible $\xi$-invariant K\"ahler metrics on $X$. We can choose as a reference metric $(g_\kappa, \omega_\kappa)$ of the form \eqref{calabi-ansatz} with profile function $\Theta_0:=(1-z^2)$; the corresponding complex structure $J$ can be naturally identified with the canonical complex structure on $X$, see e.g. \cite{acgt}. Letting $u''(z)= 1/\Theta(z)$ be {\it the fibre-wise symplectic potential} of a metric in ${\mathcal Cal}_{\omega_\kappa}$, the fibre-wise Legendre transform of $u$ defines a differentiable map $\mathcal T : {\mathcal Cal}_{\omega_\kappa} \to {\mathcal K}_{\Omega_\kappa}^{{\mathbb S}^1}$ with differential ${\boldsymbol d} {\mathcal T}_g(\dot u) = - \dot \phi$ see \cite{acgt}. Thus, a positive multiple of the pull-back of ${\mathcal M}_{(\xi, b, 4)}$ to ${\mathcal Cal}_{\omega_\kappa}$ is defined by
\begin{equation*}
\begin{split}
({\boldsymbol d} {\mathcal M}_{(\xi, b, 4)})_g(\dot u)&= \int_{-1}^{1} \dot u(z) {\mathring{{\rm Scal}}}_{(\xi, b, 4)} |z+ b|^{-5} dz \\
{\mathcal M}_{(\xi, b, 4)}({g_\kappa}) &=0.
\end{split}
\end{equation*}
A computation similar to \cite[Prop.~7]{acgt} reveals that for $b=b_\kappa$, the  solution is given by the formula
\begin{prop}\cite{amt}\label{amt} The Mabuchi energy ${\mathcal M}_{(\xi, b_\kappa, 4)}$ restricted to the space ${\mathcal Cal}_{\omega_\kappa}$ of K\"ahler metrics given by the Calabi ansatz \eqref{calabi-ansatz} is up to a positive constant 
\begin{equation*}
\begin{split}
\mathcal{M}_{(\xi, b_\kappa, 4)} : u(z) \longmapsto & \int_{-1}^1 \frac{P_{\kappa}(z)}{(z+b_\kappa)^{3}}\big(u''(z)-u''_\kappa(z)\big)dz \\
                                                           &-\int_{-1}^{1}\frac{(z+\kappa)}{(z+b_\kappa)^{3}}\log\Big(\frac{u''(z)}{u''_\kappa(z)}\Big)dz,
                                                      \end{split}
                                                      \end{equation*}
                                                      where $u_\kappa(z)= \frac{1}{2}\Big((1-z) \log (1-z) + (1+z) \log (1+z)\Big)$ is the fibre-wise symplectic potential of the canonical metric $g_\kappa\in {\mathcal Cal}_{\omega_\kappa}$ and $b_\kappa$ and $P_\kappa(z)$ are the real number and polynomial of Proposition~\ref{Koca-Tonnesen-Friedman}.
\end{prop}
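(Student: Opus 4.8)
The plan is to prove this by an explicit computation restricted to the Calabi ansatz, in the spirit of \cite[Prop.~7]{acgt}, comparing the first variations of the two sides. First I would record the classical Calabi-ansatz formulae (\cite{acgt,hwang-singer}) which, for a metric \eqref{calabi-ansatz} with profile $\Theta(z)=1/u''(z)$, express ${\rm Scal}_g$, the Laplacian $\Delta_g(z+b)$ of the momentum, and $|\xi|^2_g=\Theta(z)$ (here $\xi$ generates the ${\mathbb S}^1$-action with momentum $z$) as affine functions of $\Theta,\Theta',\Theta''$. Substituting these into \eqref{Scal_p} with $m=2$, $p=4$, i.e. ${\rm Scal}_{(\xi,b,4)}(g)=(z+b)^2{\rm Scal}_g-6(z+b)\Delta_g(z+b)-12|\xi|^2_g$, and using the elementary identity $-(z+b)^2w''+6(z+b)w'-12w=(z+b)^5\big(w/(z+b)^3\big)''$ with $w=(z+\kappa)\Theta$, one should obtain
\[
(z+\kappa)\,{\rm Scal}_{(\xi,b,4)}(g)=c_0(z+b)^2-(z+b)^5\left(\frac{(z+\kappa)\Theta(z)}{(z+b)^3}\right)'' ,
\]
with $c_0$ a constant depending only on ${\bf g}$ and $\ell$. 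This collapse is the computational heart of the matter.

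Next I would bring in \Cref{Koca-Tonnesen-Friedman}: by its construction in \cite{KoTo16}, the polynomial $P_\kappa$ of degree $\le4$ is characterized by $(P_\kappa/(z+b_\kappa)^3)''=c_0(z+b_\kappa)^{-3}-c_{(\Omega,\xi,b_\kappa,4)}(z+\kappa)(z+b_\kappa)^{-5}$ together with the normalization that $\Theta_\star:=P_\kappa/(z+\kappa)$ obeys the first-order boundary conditions \eqref{first-order}; the two integration constants are determined by these four boundary conditions, which are consistent precisely when $b=b_\kappa$ solves the Futaki relation \eqref{futaki=0}. Subtracting from the displayed identity (at $b=b_\kappa$) the constant $c_{(\Omega,\xi,b_\kappa,4)}$ and dividing by $(z+b_\kappa)^5$ then yields the pointwise identity, valid for \emph{every} $\Theta$ in ${\mathcal Cal}_{\omega_\kappa}$,
\[
\frac{(z+\kappa)\,\mathring{\rm Scal}_{(\xi,b_\kappa,4)}(g)}{(z+b_\kappa)^5}=G''(z),\qquad G:=\frac{P_\kappa(z)-(z+\kappa)\Theta(z)}{(z+b_\kappa)^3}.
\]

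With this identity the two first variations are matched as follows. From \eqref{Mabuchi}, the relation ${\boldsymbol d}{\mathcal T}_g(\dot u)=-\dot\phi$, and the Calabi-ansatz volume form (whose fibre and base integrations contribute the Jacobian $(z+\kappa)\,dz$), one gets $({\boldsymbol d}{\mathcal M}_{(\xi,b_\kappa,4)})_g(\dot u)$ equal --- up to a fixed positive constant --- to $\int_{-1}^1 G''(z)\,\dot u(z)\,dz$. On the other hand, differentiating the functional in the statement, using $\delta(u'')=\dot u''$ and $\delta\log u''=\dot u''/u''=\Theta\,\dot u''$, gives $\int_{-1}^1 G(z)\,\dot u''(z)\,dz$; two integrations by parts on $[-1,1]$ turn this into $\int_{-1}^1 G''\dot u\,dz$, the boundary terms dropping out because \eqref{first-order}, applied both to $\Theta$ and to $\Theta_\star=P_\kappa/(z+\kappa)$, forces $G(\pm1)=G'(\pm1)=0$ (and although $u''$ blows up at $z=\pm1$, the combinations $u''-u''_\kappa$ and $\log(u''/u''_\kappa)$ extend continuously there). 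Thus the two functionals have the same differential on ${\mathcal Cal}_{\omega_\kappa}$, and since both vanish at the canonical metric $g_\kappa$ (where $\Theta=1-z^2$, $u=u_\kappa$) they coincide up to the positive constant, as claimed.

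The main obstacle is the normalization bookkeeping concentrated in the first two steps: one must track the precise constants in the Calabi-ansatz formulae (with ${\rm Scal}_{g_C}=4(1-{\bf g})$, $d\theta=\ell\omega_C$), in \eqref{Scal_p}, and in the topological constant \eqref{top-const}, so that the $\Theta$-independent part of $(z+\kappa)\mathring{\rm Scal}_{(\xi,b_\kappa,4)}(g)/(z+b_\kappa)^5$ is \emph{exactly} $(P_\kappa/(z+b_\kappa)^3)''$ for the polynomial of \cite{KoTo16}, and that the over-determined boundary conditions close up, forcing $b=b_\kappa$ through \eqref{futaki=0}. Everything else is elementary one-variable calculus on $[-1,1]$ together with the variational formula \eqref{Mabuchi}.
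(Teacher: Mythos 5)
Your proposal is correct and follows essentially the route the paper points to (it cites \cite{amt} and describes the argument as ``a computation similar to \cite[Prop.~7]{acgt}''): reduce $\mathring{\rm Scal}_{(\xi,b_\kappa,4)}$ under the Calabi ansatz to $(z+b_\kappa)^{5}G''(z)/(z+\kappa)$ with $G=(P_\kappa-(z+\kappa)\Theta)/(z+b_\kappa)^3$, match first variations via two integrations by parts using the boundary conditions \eqref{first-order}, and fix the constant at $u_\kappa$. One typo to fix: the ``elementary identity'' should read $-(z+b)^2w''+6(z+b)w'-12w=-(z+b)^5\big(w/(z+b)^3\big)''$ (your displayed consequence already carries the correct sign).
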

As noticed in the proof of \cite[Cor.~3]{acgt}, if $P_\kappa(z)$ is negative on an interval $I \subset (-1,1)$, taking $f(z)$ to be a bump function with support in $I$    and $u_k(z)$ a sequence of symplectic potentials defined by $u_k''(z)= u_\kappa ''(z) + kf(z), k >0$, one has $\lim_{k\to \infty} \mathcal{M}_{(\xi, b_\kappa, 4)} (u_k)= -\infty$. We thus get
\begin{cor}\label{Lahdili} If the polynomial $P_\kappa(z)$ given in Proposition~\ref{Koca-Tonnesen-Friedman} is negative somewhere on $(-1,1)$, then the Mabuchi functional $\mathcal{M}_{(\xi, b_\kappa, 4)} $ is not bounded from below.
\end{cor}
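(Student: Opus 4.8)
The plan is to upgrade the one-line remark preceding the statement into a complete proof, using only the explicit formula of \Cref{amt} for the restriction of $\mathcal{M}_{(\xi, b_\kappa, 4)}$ to the Calabi space ${\mathcal Cal}_{\omega_\kappa}$, together with the facts recorded in \Cref{Koca-Tonnesen-Friedman} (see \eqref{futaki=0}) that $b_\kappa > 1$ and $\kappa > 1$; in particular $z + b_\kappa \geq b_\kappa - 1 > 0$ and $z + \kappa \geq \kappa - 1 > 0$ for every $z \in [-1,1]$, so all the weights appearing in \Cref{amt} are continuous and strictly positive on $[-1,1]$.

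First I would exhibit the ray of test potentials along which the functional diverges. Since $P_\kappa$ is a polynomial that is negative at some point of $(-1,1)$, by continuity it is negative on a nonempty open subinterval $I \subset (-1,1)$. Fix a smooth function $f$ with $f \geq 0$, $f \not\equiv 0$, and support contained in $I$, and for $k > 0$ let $u_k$ be the fibre-wise symplectic potential with $u_k''(z) = u_\kappa''(z) + k f(z)$ (the functional of \Cref{amt} depends on $u$ only through $u''$, so the integration constants are irrelevant). Then I would check that $u_k \in {\mathcal Cal}_{\omega_\kappa}$: its profile function $\Theta_k = 1/u_k'' = 1/\bigl(u_\kappa'' + k f\bigr)$ is positive on $(-1,1)$ because $u_\kappa'' > 0$ there (its reciprocal being the canonical profile $\Theta_0(z) = 1-z^2$) and $k f \geq 0$, which is \eqref{positivity}; and since $f$ vanishes near $z = \pm 1$, in a neighbourhood of the endpoints $\Theta_k$ coincides with $\Theta_0$, hence satisfies the first-order boundary conditions \eqref{first-order}.

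Next I would substitute $u = u_k$ into the formula of \Cref{amt}. Since $u_k'' - u_\kappa'' = k f$, the first integral equals $-c\, k$, where $c := -\int_{-1}^1 \frac{P_\kappa(z)}{(z+b_\kappa)^3} f(z)\, dz$ is \emph{strictly} positive: on the support of $f$ one has $P_\kappa < 0$, $f \geq 0$ with $f \not\equiv 0$, and $(z+b_\kappa)^3 > 0$. For the second integral, $\log(u_k''/u_\kappa'') = \log(1 + k f/u_\kappa'') \geq 0$ everywhere, and the weight $(z+\kappa)/(z+b_\kappa)^3$ is continuous and strictly positive on $[-1,1]$, so the second term of $\mathcal{M}_{(\xi, b_\kappa, 4)}(u_k)$ is $\leq 0$; moreover, its integrand being supported in the fixed compact set where $f/u_\kappa''$ is bounded, that term is also bounded in absolute value by $C' \log(1+k)$ for a constant $C'$ independent of $k$. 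Hence $\mathcal{M}_{(\xi, b_\kappa, 4)}(u_k) \leq -c\, k$ (up to the fixed positive multiplicative constant of \Cref{amt}), so $\mathcal{M}_{(\xi, b_\kappa, 4)}(u_k) \to -\infty$ as $k \to \infty$. Since each $u_k$ corresponds, through the fibre-wise Legendre transform $\mathcal T$, to a genuine K\"ahler metric in $\Omega_\kappa$, this shows that $\mathcal{M}_{(\xi, b_\kappa, 4)}$ is not bounded from below, as claimed.

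I do not anticipate any serious obstacle: once \Cref{amt} is available the computation is entirely explicit, and the only points requiring attention are (i) the admissibility of the test potentials $u_k$, which holds precisely because $f$ is supported away from the endpoints $z = \pm 1$, and (ii) checking that the logarithmic term cannot cancel the linearly divergent term — for which even the crude observation that this term is non-positive already suffices. The argument is exactly the one indicated in the proof of \cite[Cor.~3]{acgt}.
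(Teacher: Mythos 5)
Your argument is correct and is exactly the one the paper itself uses (the paper only sketches it in the sentence preceding the corollary, citing the proof of \cite[Cor.~3]{acgt}): a bump function supported where $P_\kappa<0$, the ray $u_k''=u_\kappa''+kf$, linear divergence of the first integral and non-positivity of the logarithmic term. Your write-up merely fills in the routine verifications (admissibility of $u_k$, positivity of the weights $z+b_\kappa$ and $z+\kappa$) that the paper leaves implicit.
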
                          

\subsection{Proof of Corollary~\ref{c:main}}  There are no cscK metrics on $X$ (see e.g. \cite{at}), so that we are looking for strictly conformally K\"ahler, Einstein--Maxwell metrics. As in our case ${\rm Aut}_{\rm red}(X, J)=\C^*$ (see e.g. \cite{at}), the Killing vector field $\xi$ must be a multiple of the vector field  generating rotations on the factor $\cO$. As the theory is invariant under homothety of the Killing potential, without loss we assume that this multiple is $\pm 1$.  Finally, as $H^2(X, \R)= \R^2$, by rescaling the K\"ahler class we can also assume $\Omega= \Omega_\kappa, \kappa>1$. For a K\"ahler metric $g \in \Omega_\kappa$ of the form \eqref{calabi-ansatz}, the Killing potential of $\xi$ is $|z+b|$ with $|b|>1$. The necessary condition $\mathfrak{F}_{\Omega_\kappa, \xi, b}=0$ then forces us to consider $b=b_\kappa$, see Proposition~\ref{Koca-Tonnesen-Friedman}. The existence of conformally K\"ahler, Einstein--Maxwell metrics for $\kappa\in (\kappa_0(X), \infty)$ and conformal factor $(z+b_\kappa)^{-2}$  follows from the statement in (a) of Proposition~\ref{Koca-Tonnesen-Friedman}. 

We are left to show non-existence for $\kappa\in (1,\kappa_0(X)]$. Again, by Proposition~\ref{Koca-Tonnesen-Friedman}, we have to take $b=b_\kappa>1$. 

Consider first the case $\kappa\in (1,\kappa_0(X))$. If $\kappa$ is rational, the result follows from Theorem~\ref{Mabuchi-bounded} and Corollary~\ref{Lahdili}. Otherwise, if $\kappa\in (1,\kappa_0(X)) \setminus \Q$, we suppose for contradiction that $\Omega_\kappa$ admits a K\"ahler metric of constant  $(z+b_\kappa, 4)$-scalar curvature. By \cite[Theorem 2]{lahdili}, the same will hold for all $(\kappa', b_{\kappa'})$ on the rational curve \eqref{futaki=0} which are sufficiently close to $(\kappa, b_\kappa)$, in particular  for all rational pairs $(\kappa', b_{\kappa'})$ close to $(\kappa, b_\kappa)$, a contradiction. Finally, consider $\kappa=\kappa_0(X)=\kappa_0$, $b_{\kappa_0}= b_0$. Again, suppose for contradiction that $\Omega_{\kappa_0}$ admits a metric of constant $(\xi,b_0,4)$-scalar curvature. We use again \cite[Theorem 2]{lahdili} to deduce that this holds also for all $(\kappa, b_\kappa)$ near $(\kappa_0, b_0)$ and we can find again {\it rational} valued  $(\kappa, b_\kappa)$ arbitrarily close to $(\kappa_0, b_0)$ with $\kappa<\kappa_0$, and still admitting a $(\xi,b_\kappa, 4)$-extremal K\"ahler metric, a contradiction.

\end{document}